\edef\restoreparindent{\parindent=\the\parindent\relax}
\def\specialsection{\@startsection{section}{1}%
  \z@{\linespacing\@plus\linespacing}{.5\linespacing}%
  {\raggedright\sffamily\LARGE\bfseries}}
\def\section{\@startsection{section}{1}%
  \z@{.9\linespacing\@plus\linespacing}{.7\linespacing}%
  {\raggedright\sffamily\LARGE\bfseries}}
  \renewcommand{\@secnumfont}{\raggedright\sffamily\Large\bfseries}
\def\subsection{\@startsection{subsection}{2}%
  \z@{.9\linespacing\@plus.7\linespacing}{.7\linespacing}%
  {\raggedright\sffamily\Large\bfseries}}
\newtheoremstyle{dthm}
  {.5\baselineskip}
  {.5\baselineskip}
  {\itshape}
  {}
  {\sffamily\bfseries}
  {\ifx\thmnote\@gobble.\else\normalfont.\fi}
  {.5em}
  {}
\newtheoremstyle{ddef}
  {.5\baselineskip}
  {.5\baselineskip}
  {\normalfont}
  {}
  {\sffamily\bfseries}
  {\ifx\thmnote\@gobble.\else\normalfont.\fi}
  {.5em}
  {}
\theoremstyle{dthm}
\newtheorem{theorem}{Theorem}
\newtheorem{corollary}{Corollary}
\newtheorem{lemma}{Lemma}
\newtheorem{proposition}{Proposition}
\theoremstyle{ddef}
\newtheorem{remark}{Remark}
\def \dv {\mathrm{div}}
\def \d {\mathrm{d}}
\def\@settitle{\begin{center}%
  \baselineskip14\p@\relax
    \raggedright\sffamily\Huge\bfseries
\uppercasenonmath\@title
  \@title
  \end{center}%
}
\title[A\MakeLowercase{n inverse problem of radiative potentials and initial temperatures}] 
      {A\MakeLowercase{n inverse problem of radiative potentials\\and initial temperatures in parabolic\\equations with dynamic boundary\\conditions}}
\address{\textsf{\textbf{E. M. Ait Ben Hassi, S. E. Chorfi, L. Maniar,} Department of Mathematics, Faculty of Sciences Semlalia, LMDP, UMMISCO (IRD-UPMC), Cadi Ayyad University, Marrakesh 40000, B.P. 2390, Morocco.}}
\email{m.benhassi@gmail.com}	
\email{chorphi@gmail.com}	
\email{maniar@uca.ma}
\patchcmd{\abstract}{3pc}{0pt}{}{}
\begin{document}
\begin{abstract}
\normalsize
We study an inverse problem involving the restoration of two radiative potentials, not necessarily smooth, simultaneously with initial temperatures in parabolic equations with dynamic boundary conditions. We prove a Lipschitz stability estimate for the relevant potentials using a recent Carleman estimate, and a logarithmic stability result for the initial temperatures by a logarithmic convexity method, based on observations in an arbitrary subdomain.

\bigskip
\noindent \textsf{\textbf{Keywords.}} Inverse problem, Carleman estimate, Lipschitz stability, logarithmic stability, dynamic boundary conditions

\bigskip
\noindent \textsf{\textbf{MSC 2020.}} 35R30, 35K57
\end{abstract}

\noindent \textsf{\Large E. M. Ait Ben Hassi, S. E. Chorfi, and L. Maniar}
\vspace{-0.1cm}

\maketitle

\section{Introduction and main results}
This paper deals with the analysis of an inverse problem concerning the reconstruction of non-smooth radiative potentials simultaneously with initial temperatures in parabolic equations with dynamic boundary conditions, based on some internal observed data.

Let $\Omega \subset \mathbb{R}^N$ be nonempty bounded connected open set ($N\geq 2$ is an integer), with boundary $\Gamma=\partial\Omega$ of class $C^2$, and $\nu$ be the normal vector field of $\Gamma$ pointing outside of $\Omega$. We fix $T>0$ and a nonempty subdomain $\omega \Subset \Omega$. In the sequel, we denote $\Omega_T =(0,T)\times \Omega, \, \omega_T =(0,T)\times \omega, \, \Gamma_T =(0,T)\times \Gamma$. Consider the following system
\begin{empheq}[left =\left(\mathcal{E}_{p,q,Y_0}\right)\empheqlbrace]{alignat=2}
\begin{aligned}
&\partial_t y = \dv(A(x) \nabla y) + B(x)\cdot \nabla y + p(x)y, &\text{in } \Omega_T , \\
&\partial_t y_{\Gamma}= \dv_{\Gamma} (D(x)\nabla_\Gamma y_{\Gamma}) -\partial_{\nu}^A y + \langle b(x), \nabla_\Gamma y_{\Gamma} \rangle_{\Gamma} + q(x)y_{\Gamma}, &\text{on } \Gamma_T, \\
&y_{\Gamma}(t,x) = y_{|\Gamma}(t,x), &\text{on } \Gamma_T, \\
&(y,y_{\Gamma})\rvert_{t=0}=Y_0:=(y_0, y_{0,\Gamma}),  &\Omega\times\Gamma, \nonumber
\end{aligned}
\end{empheq}
where $(y_0, y_{0,\Gamma})\in L^2(\Omega)\times L^2(\Gamma)$ are the initial conditions, and all the coefficients in system $\left(\mathcal{E}_{p,q,Y_0}\right)$ are assumed to be bounded,
\begin{equation}
B \in L^\infty(\Omega)^N, \quad p \in L^\infty(\Omega), \quad b\in L^\infty(\Gamma)^N, \; q \in L^\infty(\Gamma). \label{bdcoeff}
\end{equation}
We assume that the diffusion matrices $A$ and $D$ are symmetric and uniformly elliptic, i.e.,
\begin{align}
A=(a_{ij})_{i,j} \in C^1\left(\overline{\Omega}; \mathbb{R}^{N\times N}\right), \quad a_{ij}=a_{ji}, \quad 1\leq i,j\leq N, \label{symA}\\
D=(d_{ij})_{i,j} \in C^1\left(\Gamma; \mathbb{R}^{N\times N}\right), \quad d_{ij}=d_{ji}, \quad 1\leq i,j\leq N, \label{symD}
\end{align}
and there exists a constant $\beta >0$ such that
\begin{align}
A(x)\zeta \cdot \zeta &\geq \beta |\zeta|^2, && x\in \overline{\Omega}, \;\zeta \in \mathbb{R}^N, \label{uellipA}\\
\langle D(x)\zeta, \zeta \rangle_{\Gamma} &\geq \beta |\zeta|_\Gamma^2, && x\in \Gamma, \;\zeta \in \mathbb{R}^N. \label{uellipD}
\end{align}
Here, $``\cdot"$ denotes the standard inner product on $\mathbb{R}^N$ and $\langle \cdot, \cdot\rangle_\Gamma$ is the Riemannian inner product on $\Gamma$ (see definition below). By $y_{|\Gamma}$, one denotes the trace of $y$, while $\partial_{\nu}^A y :=(A\nabla y\cdot \nu)_{|\Gamma}$ is the conormal derivative and $\partial_\nu y:=(\nabla y\cdot \nu)_{|\Gamma}$ is the normal derivative on $\Gamma$. The operator $\dv$ stands for the usual divergence operator carried out with respect to the $x$-variable in $\Omega$. If $\mathrm{g}$ denotes the natural Riemannian metric on $\Gamma$ induced by the inclusion $\Gamma \hookrightarrow \mathbb{R}^N$, the tangential gradient $\nabla_\Gamma y$ (with respect to $\mathrm{g}$) is the part of the standard gradient $\nabla y$ tangent to $\Gamma$, namely, $\nabla_\Gamma y = \nabla y -(\partial_\nu y) \nu$. The divergence operator $\dv_\Gamma$ associated with the Riemannian metric $\mathrm{g}$ is defined locally by $\displaystyle \dv_\Gamma(X)=\frac{1}{\sqrt{|\mathrm{g}|}} \sum\limits_{j=1}^{N-1} \dfrac{\partial}{\partial x^j} \left(\sqrt{|\mathrm{g}|}\, X^j\right)$, where $\displaystyle X = \sum\limits_{j=1}^{N-1} \dfrac{\partial}{\partial x^j} X^j$ is any smooth vector field and we denoted by $\mathrm{g}=\left(\mathrm{g}_{ij}\right)$ the corresponding metric tensor and $|\mathrm{g}|=\det\left(\mathrm{g}_{ij}\right)$. The term $\dv_\Gamma\left(D(\cdot)\nabla y_\Gamma\right)$ represents surface diffusion effects on the boundary $\Gamma$ given by the vector field $D(\cdot)\nabla y_\Gamma: \Gamma \rightarrow T\Gamma$. For $x\in \Gamma$, the inner product and the norm on the tangent space $T_x \Gamma$ are defined by
$$\langle X_1, X_2 \rangle_\Gamma =\sum_{i,j=1}^{N-1} \mathrm{g}_{ij}(x) X_1^i X_2^j, \qquad |X|_\Gamma=\langle X,X\rangle_\Gamma^{1/2}.$$
Since $\Gamma$ is a compact Riemannian manifold without boundary, the following divergence formula holds
\begin{equation}
\int_\Gamma (\dv_\Gamma X)z \,\d S =- \int_\Gamma \langle X, \nabla_\Gamma z \rangle_\Gamma \,\d S, \qquad z\in H^1(\Gamma), \label{sdt}
\end{equation}
where $X$ is any $C^1$ vector field and $\d S$ is the surface measure on $\Gamma$.

In the last recent years, an increasing interest has been devoted to parabolic problems with dynamic boundary conditions, see, e.g., \cite{FGGR'02, Go'06, KMMR'19, Mo'10}. We refer to \cite{Go'06} for the physical interpretation and the derivation of such type of boundary conditions. Recently, Maniar et al. have proven new Carleman estimates for such problems within the scope of null controllability and inverse problems, see \cite{ACMO'19, BCMO'20, KM'19, MMS'17}.
\smallskip

\noindent\textbf{Inverse potentials problem.}\\
Let $T>0$, $\theta \in (0,T)$ and $0<t_0 <t_1 \leq T$ such that $\displaystyle \theta=\frac{t_0 +t_1}{2}$. Denote
$$\mathbb{L}^2=L^2(\Omega)\times L^2(\Gamma), \qquad \mathbb{L}^\infty=L^\infty(\Omega)\times L^\infty(\Gamma)$$
and $$\mathbb{W}^{2,\infty}=\{(y,y_\Gamma) \in W^{2,\infty}(\Omega)\times W^{2,\infty}(\Gamma) \colon y_{|\Gamma} = y_\Gamma\},$$
with corresponding norm $\|\cdot\|_{2,\infty}$. For fixed constants $r>0$ and $R>0$, we denote the set of admissible initial data by
\begin{align}
\mathcal{I}:=\{Y_0=(y_0, y_{0,\Gamma}) \in \mathbb{W}^{2,\infty} \colon y_0, y_{0,\Gamma}\geq r, \|Y_0\|_{2,\infty} \leq R\} \label{init11}
\end{align}
and the set of admissible potentials by
\begin{align}
\mathcal{P}:=\{(p,q) \in \mathbb{L}^\infty \colon \|p\|_\infty ,\|q\|_\infty \leq R\}.
\end{align}
Our purpose is to determine the radiative potentials $p\in L^\infty(\Omega)$ and $q\in L^\infty(\Gamma)$ in $\left(\mathcal{E}_{p,q,Y_0}\right)$ belonging to $\mathcal{P}$, from a single measurement $Y(\theta, \cdot)=(y,y_{\Gamma})\rvert_{t=\theta}$ and extra partial observation on the first component of the solution $y\rvert_{(t_0, t_1)\times\omega}$. These potentials model radiative loss in the heat process, due to low temperature excess either in the domain or on the boundary.

We mainly aim to establish the following Lipschitz stability estimate.
\begin{theorem}\label{thm1}
Let $\omega_{t_0,t_1}=(t_0,t_1)\times \omega$. Consider $Y$ and $\widetilde{Y}$ the corresponding solutions of $\left(\mathcal{E}_{p,q,Y_0}\right)$ and $\left(\mathcal{E}_{\widetilde{p},\widetilde{q},\widetilde{Y}_0}\right)$ respectively, with given $r, R>0$. Then, there exists a positive constant $C=C\left(\Omega, \omega,t_0, t_1, r, R\right)$ such that
\begin{equation}
\|(p-\widetilde{p}, q-\widetilde{q})\|_{\mathbb{L}^{2}} \leq C\left(\|Y(\theta, \cdot)-\widetilde{Y}(\theta, \cdot)\|_{\mathbb{H}^2} + \|\partial_t y - \partial_t \widetilde{y}\|_{L^2\left(\omega_{t_0, t_1}\right)}\right) \label{eqthm1}
\end{equation}
for all initial data $Y_0, \widetilde{Y}_0\in \mathcal{I}$ and potentials $(p,q), (\widetilde{p}, \widetilde{q}) \in \mathcal{P}$.
\end{theorem}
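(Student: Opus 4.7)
My plan is to follow the Bukhgeim--Klibanov / Imanuvilov--Yamamoto approach, adapted to the bulk/surface coupled setting by means of the recent Carleman estimate for parabolic problems with dynamic boundary conditions from the works cited above. Set $Z=(z,z_\Gamma):=Y-\widetilde Y$, $f:=p-\widetilde p$ and $g:=q-\widetilde q$. Subtracting $(\mathcal E_{\widetilde p,\widetilde q,\widetilde Y_0})$ from $(\mathcal E_{p,q,Y_0})$ shows that $Z$ solves a linear system of the same structure with potentials $p,q$, driven by the source terms $f\,\widetilde y$ in $\Omega_T$ and $g\,\widetilde y_\Gamma$ on $\Gamma_T$. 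Since the coefficients are time-independent, differentiation in $t$ gives that $U:=\partial_t Z=(\partial_t z,\partial_t z_\Gamma)$ satisfies the same linear problem, with source terms $f\,\partial_t\widetilde y$ and $g\,\partial_t\widetilde y_\Gamma$.

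The heart of the proof is to apply the Carleman estimate to $(U,U_\Gamma)$ on the time window $(t_0,t_1)$ with a weight of the form $\varphi(t,x)=\alpha(t)\psi(x)$, where $\alpha$ is smooth, positive, blows up as $t\to t_0^+,t_1^-$ and is minimal at $t=\theta$, and $\psi>0$ is a smooth function with $\nabla\psi\neq 0$ on $\overline{\Omega}\setminus\omega$. The admissibility conditions $Y_0\in\mathcal I$ and $(p,q)\in\mathcal P$, combined with maximal parabolic regularity for the bulk--surface system, yield $L^\infty$-bounds on $\partial_t\widetilde y,\partial_t\widetilde y_\Gamma$ depending only on $r,R$. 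The source contributions $f\,\partial_t\widetilde y$ and $g\,\partial_t\widetilde y_\Gamma$ are therefore dominated by $(\|f\|_{L^2(\Omega)}^2+\|g\|_{L^2(\Gamma)}^2)$ multiplied by a time integral of $e^{-2s\varphi}$ that is $O(s^{-\beta})$ for some $\beta>0$.

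To convert this weighted bound on $U$ into one on $(f,g)$, I invoke the Bukhgeim--Klibanov trick at $t=\theta$. Evaluating the $Z$-equations at $t=\theta$ gives the pointwise identities
\begin{equation*}
f(x)\,\widetilde y(\theta,x)=\partial_t z(\theta,x)-\dv\bigl(A(x)\nabla z(\theta,x)\bigr)-B(x)\cdot\nabla z(\theta,x)-p(x)\,z(\theta,x),\qquad x\in\Omega,
\end{equation*}
and the analogous tangential identity for $g(x)\,\widetilde y_\Gamma(\theta,x)$ on $\Gamma$. A comparison principle for the coupled bulk--surface parabolic equation together with the positivity $y_0,y_{0,\Gamma}\ge r$ produces a uniform lower bound $\widetilde y(\theta,\cdot),\widetilde y_\Gamma(\theta,\cdot)\ge c(r,R)>0$, so dividing by $\widetilde y(\theta,\cdot)$ and $\widetilde y_\Gamma(\theta,\cdot)$ yields
\[
\|f\|_{L^2(\Omega)}^2+\|g\|_{L^2(\Gamma)}^2\le C\bigl(\|U(\theta,\cdot)\|_{\mathbb L^2}^2+\|Z(\theta,\cdot)\|_{\mathbb H^2}^2\bigr).
\]
A standard weighted energy estimate for $U$ on $(t_0,\theta)$, obtained by testing the $U$-equation against $e^{-2s\varphi}U$ and integrating by parts (the boundary contribution at $t=t_0$ vanishes because $\varphi\to+\infty$), bounds $\|U(\theta,\cdot)\|_{\mathbb L^2}^2$ by the Carleman left-hand side, which is itself controlled by the observation term $\|\partial_t z\|_{L^2(\omega_{t_0,t_1})}^2$ plus the $O(s^{-\beta})(\|f\|_{L^2}^2+\|g\|_{L^2}^2)$ source contribution. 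Choosing $s$ large enough absorbs this source term and yields \eqref{eqthm1}.

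The points I expect to be most delicate are the bookkeeping of the surface integration-by-parts contributions produced by the dynamic boundary law—which do not vanish as in the Dirichlet or Neumann cases and must be absorbed by the boundary term provided by the recent bulk/surface Carleman estimate—and the justification of the uniform positive lower bound on $\widetilde Y(\theta,\cdot)$, which hinges on a comparison principle for the coupled system under the structural hypotheses \eqref{bdcoeff}--\eqref{uellipD}.
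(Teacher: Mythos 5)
Your proposal is correct and follows essentially the same route as the paper: differentiate the difference system in time, apply the bulk--surface Carleman estimate to $\partial_t Z$, recover $(p-\widetilde p,q-\widetilde q)$ via the Bukhgeim--Klibanov identity at $t=\theta$ using the uniform lower bound $\widetilde y(\theta,\cdot),\widetilde y_\Gamma(\theta,\cdot)\ge c(r,R)>0$ and the $L^\infty$ bound on $\partial_t\widetilde Y$, control $\|U(\theta,\cdot)\|_{\mathbb L^2}$ by a weighted energy identity on $(t_0,\theta)$ whose boundary term at $t_0$ vanishes, and absorb the source contribution for $s$ large. The only cosmetic differences are that the paper obtains the positivity and $L^\infty$ bounds by semigroup arguments (Markovian property and Trotter product formula) rather than a comparison principle, and organizes the weighted energy step through the operators $M_1,N_1$ from the Carleman decomposition.
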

Consequently, we obtain the following logarithmic stability result for initial temperatures belonging to $\mathcal{I}$.
\begin{proposition}\label{pthm1}
Consider $Y$ and $\widetilde{Y}$ the corresponding solutions of $\left(\mathcal{E}_{p,q,Y_0}\right)$ and $\left(\mathcal{E}_{\widetilde{p},\widetilde{q},\widetilde{Y}_0}\right)$ respectively. There exist two positive constants $C =C\left(\Omega, \omega,t_0, t_1, r,R\right)$ and $C_1 =C_1\left(\Omega, \omega,t_0, t_1, r,R\right)$ such that, for all initial data $Y_0, \widetilde{Y}_0 \in \mathcal{I}$ and potentials $(p,q), (\widetilde{p}, \widetilde{q}) \in \mathcal{P}$,
\begin{equation}
\|Y_0 - \widetilde{Y}_0\|_{\mathbb{L}^{2}} \leq \frac{-C}{\log\left( C_1\left(\|Y(\theta, \cdot)-\widetilde{Y}(\theta, \cdot)\|_{\mathbb{H}^2} + \|\partial_t y - \partial_t \widetilde{y}\|_{L^2\left(\omega_{t_0, t_1}\right)}\right)\right)}
\end{equation}
for $\|Y(\theta, \cdot)-\widetilde{Y}(\theta, \cdot)\|_{\mathbb{H}^2} + \|\partial_t y - \partial_t \widetilde{y}\|_{L^2\left(\omega_{t_0, t_1}\right)}$ sufficiently small. 
\end{proposition}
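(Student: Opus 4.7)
The plan is to combine the Lipschitz estimate of Theorem~\ref{thm1} with a logarithmic convexity argument applied to the difference $Z := Y - \widetilde{Y} = (z, z_\Gamma)$. Subtracting the two systems shows that $Z$ satisfies a problem with the same principal part as $\left(\mathcal{E}_{p,q,Y_0}\right)$, driven by the interior source $f := (p - \widetilde{p})\widetilde{y}$ and the boundary source $g := (q - \widetilde{q})\widetilde{y}_\Gamma$. Setting
$$\varepsilon := \|Y(\theta, \cdot)-\widetilde{Y}(\theta, \cdot)\|_{\mathbb{H}^2} + \|\partial_t y - \partial_t \widetilde{y}\|_{L^2(\omega_{t_0, t_1})},$$
standard parabolic regularity applied to $\widetilde{Y}_0 \in \mathcal{I}$ together with \eqref{bdcoeff} yields $\|\widetilde{y}\|_{L^\infty(\Omega_T)} + \|\widetilde{y}_\Gamma\|_{L^\infty(\Gamma_T)} \le C(R)$, so that Theorem~\ref{thm1} gives $\|f\|_{L^2(\Omega_T)} + \|g\|_{L^2(\Gamma_T)} \le C\varepsilon$.

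I would then establish a logarithmic convexity estimate on $[0, \theta]$ for the shifted energy $\Phi(t) := \|Z(t)\|_{\mathbb{L}^2}^2 + C_0 \varepsilon^2$, the additive constant absorbing the source contribution. Proceeding \`a la Agmon--Nirenberg, one computes $\Phi'$ and $\Phi''$, integrates by parts in the bulk and on $\Gamma$ using \eqref{sdt}, pairs the conormal term $\partial_\nu^A z$ with the surface integral through the coupling $z_\Gamma = z_{|\Gamma}$, and absorbs the non-symmetric drift terms $B\cdot\nabla z$ and $\langle b,\nabla_\Gamma z_\Gamma\rangle_\Gamma$ into the bulk and surface diffusion via weighted Cauchy--Schwarz. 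This produces a differential inequality $(\log\Phi)''(t)\ge -K$ with $K = K(\Omega, R)$, which exponentiates to the three-time interpolation
$$\Phi(t) \le e^{K\theta^2}\, \Phi(0)^{1 - t/\theta}\, \Phi(\theta)^{t/\theta}, \qquad t \in [0, \theta].$$
Since $Y_0,\widetilde{Y}_0 \in \mathcal{I} \subset \mathbb{W}^{2,\infty}$, the well-posedness of $\left(\mathcal{E}_{p,q,Y_0}\right)$ supplies the a priori bounds $\|Z(0)\|_{\mathbb{L}^2} \le M$ and $\|\partial_t Z\|_{C([0,T];\mathbb{L}^2)} \le M'$, both depending only on $R$. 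Combining $\Phi(\theta) \le C\varepsilon^2$ with the triangle inequality
$$\|Z(0)\|_{\mathbb{L}^2} \le t M' + \|Z(t)\|_{\mathbb{L}^2} \le t M' + C M^{1 - t/\theta}\, \varepsilon^{t/\theta} + C\varepsilon,$$
and optimizing in $t$ (the natural choice being $t$ of the order $\theta/\log(1/\varepsilon)$, which balances the linear contribution against the exponentially small interpolation term), one obtains the logarithmic modulus $C/|\log(C_1 \varepsilon)|$ stated in the proposition, once $\varepsilon$ is small enough for this logarithm to be negative.

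The main obstacle is the logarithmic convexity step for this coupled bulk/surface problem. The natural energy $\|Z(t)\|_{\mathbb{L}^2}^2 = \|z(t)\|_{L^2(\Omega)}^2 + \|z_\Gamma(t)\|_{L^2(\Gamma)}^2$ forces $\Phi''$ to be computed while simultaneously tracking contributions from $\Omega$ and from $\Gamma$, and several boundary terms produced along the way---most notably the conormal derivative $\partial_\nu^A z$---must cancel through a careful use of the dynamic boundary condition. Ensuring that this cancellation, combined with the absorption of the non-self-adjoint perturbations $B\cdot\nabla$, $\langle b,\nabla_\Gamma\cdot\rangle_\Gamma$, $p$ and $q$, yields a lower bound on $(\log\Phi)''$ uniform over $(p,q)\in\mathcal{P}$ and $Y_0,\widetilde{Y}_0\in\mathcal{I}$ is the technical heart of the argument.
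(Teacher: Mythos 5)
There are two genuine gaps, and they are precisely the points where the paper takes a different route. First, the logarithmic convexity step that you defer to as ``the technical heart'' is not available in the form you propose. An Agmon--Nirenberg computation of $(\log\Phi)''$ for the coupled bulk/surface operator requires splitting $\mathcal{A}$ into symmetric and skew-symmetric parts, and for the drift terms this produces $\dv B$ and $\dv_\Gamma b$; since $B$ and $b$ are only $L^\infty$ here, that decomposition is meaningless. This is exactly why the paper abandons the Agmon--Nirenberg/Isakov route (see the Remark at the end of Section \ref{subsec3.3}) and instead invokes Miller's logarithmic convexity for analytic semigroups of angle $\frac{\pi}{2}$ (Lemma \ref{lemA4}), applied to the \emph{homogeneous} part of a Duhamel splitting $V=U+W$ with $W(0)=0$ carrying the sources. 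Note also that the paper applies this to $V=\partial_t Z$, not to $Z$ itself.

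Second, even granting the convexity inequality, your final optimization does not deliver the stated rate. With $t\sim \theta/\log(1/\varepsilon)$ the interpolation term is $M^{1-t/\theta}\varepsilon^{t/\theta}=M(\varepsilon/M)^{t/\theta}\approx M\mathrm{e}^{-1}$, a constant, not ``exponentially small''; and the true minimizer of $tM'+M(\varepsilon/M)^{t/\theta}$ yields a value of order $\log\log(1/\varepsilon)/\log(1/\varepsilon)$, which is strictly weaker than the claimed $1/|\log(C_1\varepsilon)|$ (no single choice of $t$ can do better, since the minimum is bounded below by $t^*M'$ with $t^*\sim\theta\log\log(1/\varepsilon)/\log(1/\varepsilon)$). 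The paper avoids this loss precisely by applying logarithmic convexity to $\partial_t Z$ rather than to $Z$: it obtains $\|\partial_t Z(\tau)\|_{\mathbb{L}^2}\le C E^{\tau/\theta}+C\|(a,\ell)\|_{\mathbb{L}^2}$ for every $\tau\in[0,\theta]$ and then integrates, using
\begin{equation*}
\int_0^\theta E^{\tau/\theta}\,\d\tau=\theta\,\frac{E-1}{\log E}\le \frac{\theta}{|\log E|}\quad\text{for } 0<E<1,
\end{equation*}
which produces the logarithmic modulus exactly. Your crude uniform bound $\|\partial_t Z\|_{C([0,T];\mathbb{L}^2)}\le M'$ discards the decay of $\|\partial_t Z(\tau)\|_{\mathbb{L}^2}$ as $\tau\to\theta$ that this integration exploits, and that decay is what the whole argument hinges on.
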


The idea of using Carleman estimates within the framework of inverse problems was first introduced by Bukhgeim and Klibanov \cite{BK'81} in 1981. They proved uniqueness and H\"older stability results using a local Carleman estimate (estimate fulfilled by compactly supported functions). In 1996, Puel and Yamamoto modified this method using a global Carleman estimate for the wave equation in \cite{PY'96}, which led to improving the Bukhgeim-Klibanov technique by obtaining a Lipschitz stability result for the source term. It is in 1998 that the method was extended to the parabolic case by Imanuvilov and Yamamoto \cite{IY'98} for the associated inverse source problem, using Carleman estimates developed by Firsikov and Imanuvilov in the context of null controllability of classical parabolic equations \cite{FI'96}.

Most of the inverse problems encountered in the literature impose classical homogeneous boundary conditions (Dirichlet, Neumann or mixed boundary conditions), see, for instance, \cite{BFM'16, IY'98, PY'96}. Recently, the authors have studied a linear inverse source problem for a parabolic system with dynamic boundary conditions \cite{ACMO'19} and obtained a Lipschitz stability estimate for the forcing terms in the interior and on the boundary of the domain. The problem therein can be viewed as a linearized form of our inverse coefficient problem around a given solution. However, it only implies a local stability estimate where the constant $C$ in \eqref{eqthm1} will depend on initial data.

Regarding the inverse coefficient problems for parabolic equations, Yamamoto and Zou in 2001 have adapted the underlying method to prove a Lipschitz stability result for the radiative potential in the classical heat equation \cite{YZ'01}. Later on, the results were established for periodic potentials by Choi \cite{Ch'03}. In such problems, it is very common to consider smooth coefficients due to the use of maximum principles to prove positivity and boundedness results of the solution. Here we use a semigroup approach which allows us to weaken the regularity of coefficients in our inverse potentials problem.

The standard ingredient to infer logarithmic stability for initial temperature is the logarithmic convexity method. In \cite{Ch'03, YZ'01}, the authors used this method from \cite{Pa'75} for the operator $\Delta -p$ with appropriate domains. The latter result of logarithmic convexity is known for self-adjoint operators. Using an extension of the logarithmic convexity method from \cite{Is'17}, similar results were obtained in \cite{CGR'06} for a $2\times 2$ reaction-diffusion system governed by a bounded perturbation of a self-adjoint operator. The aforementioned method requires more regularity on coefficients as well as initial conditions. In our case, the operator is not self-adjoint and contains non-smooth drifts. We also deal with less regular initial data. Thus, we shall use a different and rather general logarithmic convexity result from \cite{Mi'75} (see Appendix \ref{ap1}).
 
The rest of this paper is organized as follows. In Section \ref{sec2}, we highlight the well-posedness and the positivity of the solution. Section \ref{sec3} is devoted to the proof of Lipschitz stability for the above coefficient inverse problem using Carleman estimates. Then, the logarithmic stability result for initial data is derived. Finally, we give an overview on the logarithmic convexity method in Appendix.

\section{General Framework}\label{sec2}
\subsection{Notations}
We denote the Lebesgue measure on $\Omega$ by $\d x$ and the surface measure on $\Gamma$ by $\d S$. For $t_0, t_1 \in \mathbb{R}, t_0 <t_1$, set
$$\Omega_{t_0,t_1}:=(t_0,t_1)\times \Omega, \quad \Gamma_{t_0,t_1}:=(t_0,t_1)\times \Gamma, \quad\omega_{t_0,t_1}:=(t_0,t_1)\times \omega.$$
Let us introduce the real product spaces defined by
$$\mathbb{L}^2:=L^2(\Omega, \d x)\times L^2(\Gamma, \d S)$$
and
$$\mathbb{L}^\infty=L^\infty(\Omega)\times L^\infty(\Gamma), \qquad \mathbb{L}^\infty_T=L^\infty(\Omega_T)\times L^\infty(\Gamma_T).$$
Recall that $\mathbb{L}^2$ is a real Hilbert space with the corresponding scalar product given by $\langle (y,y_\Gamma),(z,z_\Gamma)\rangle_{\mathbb{L}^2} =\langle y,z\rangle_{L^2(\Omega)} +\langle y_\Gamma,z_\Gamma\rangle_{L^2(\Gamma)}$. Moreover, $\mathbb{L}^2$ is a Hilbert lattice, and its positive cone is the product of the positive cones of $L^2(\Omega)$ and $L^2(\Gamma)$. For $\mathfrak{u}:=(u,u_\Gamma)\in \mathbb{L}^2 $, we denote by $\mathfrak{u}^+:=(u^+, u_\Gamma^+)$ and $\mathfrak{u}^- :=(u^-, u_\Gamma^-)$, where $v^+ :=\max\{v,0\}$ and $v^- :=-\min\{v,0\}$ either in $\Omega$ or on $\Gamma$. For the regularity of the solution, we introduce the following spaces
$$\mathbb{H}^k:=\{(y,y_\Gamma)\in H^k(\Omega)\times H^k(\Gamma)\colon y_{|\Gamma} =y_\Gamma \} \text{ for } k=1,2,$$
and 
$$\mathbb{E}_1(t_0,t_1):=H^1\left(t_0,t_1 ;\mathbb{L}^2\right) \cap L^2\left(t_0,t_1 ;\mathbb{H}^2\right),$$
$$\mathbb{E}_2(t_0,t_1):=H^1\left(t_0,t_1;\mathbb{H}^2\right) \cap H^2\left(t_0,t_1;\mathbb{L}^2\right).$$
In particular,
$\mathbb{E}_1 := \mathbb{E}_1(0,T)$ and $\mathbb{E}_2:= \mathbb{E}_2(0,T)$.

\subsection{Well-posedness and positivity of the solution}
The system $\left(\mathcal{E}_{p,q,Y_0}\right)$ can be rewritten as the following abstract Cauchy problem
\begin{numcases}{\text{(ACP)}\label{acp}}
\hspace{-0.1cm}\strut \partial_t Y(t)=\mathcal{A} Y(t), \quad 0<t<T, \nonumber\\
\hspace{-0.1cm} Y(0)=Y_0, \nonumber
\end{numcases}
where $Y:=(y,y_{\Gamma})$, $Y_0=(y_0, y_{0,\Gamma})$ and the linear operator $\mathcal{A} \colon D(\mathcal{A}) \subset \mathbb{L}^2 \longrightarrow \mathbb{L}^2$ is given by $D(\mathcal{A})=\mathbb{H}^2$ and $\mathcal{A}=\mathcal{A}_0+P$, where 
\begin{align}
\begin{aligned}
\mathcal{A}_0 &=\begin{pmatrix} \dv(A\nabla \cdot)+B\cdot \nabla & 0\\ -\partial_\nu^A & \dv_\Gamma(D\nabla_\Gamma \cdot) + \langle b, \nabla_\Gamma \cdot\rangle_{\Gamma} \end{pmatrix}, &&\quad D(\mathcal{A}_0)=\mathbb{H}^2,\\
P &=\begin{pmatrix} p & 0\\ 0 & q \end{pmatrix}, &&\quad D(P)=\mathbb{L}^2. \label{op1}
\end{aligned}
\end{align}
Since $P$ is a bounded operator on $\mathbb{L}^2$, the operator $\mathcal{A}$ is a bounded perturbation of $\mathcal{A}_0$. Then it suffices to study the generation results for $\mathcal{A}_0$ and recover those for $\mathcal{A}$ by perturbation arguments. Following \cite{ACMO'19}, we introduce the densely defined bilinear form given by
\begin{align*}
\mathfrak{a}_0[(y,y_\Gamma),(z,z_\Gamma)]&=\bigintsss_\Omega \left[A(x)\nabla y \cdot \nabla z -  (B(x) \cdot\nabla y)z \right] \d x + \int_\Gamma \left[\langle D(x) \nabla_\Gamma y_\Gamma, \nabla_\Gamma z_\Gamma \rangle_\Gamma - \langle b(x), \nabla_\Gamma y_{\Gamma} \rangle_{\Gamma} z_\Gamma \right] \d S,
\end{align*}
with form domain $D(\mathfrak{a}_0)=\mathbb{H}^1$ on the Hilbert space $\mathbb{L}^2$. We associate with the form $\mathfrak{a}_0$ an operator $\widetilde{\mathcal{A}}_0$ given by
\begin{align}
D(\widetilde{\mathcal{A}}_0)&:=\{(y,y_\Gamma) \in \mathbb{H}^1, \text{ there exists } (w,w_\Gamma) \in \mathbb{L}^2 \text{ such that } \nonumber\\
&\quad \mathfrak{a}_0[(y,y_\Gamma),(z,z_\Gamma)]=\langle (w,w_\Gamma), (z,z_\Gamma)\rangle_{\mathbb{L}^2} \text{ for all } (z,z_\Gamma) \in \mathbb{H}^1\}, \label{fo1}\\
\widetilde{\mathcal{A}}_0(y,y_\Gamma)&:=-(w,w_\Gamma) \qquad \text{ for all }(y,y_\Gamma) \in D(\widetilde{\mathcal{A}}_0). \label{fo2}
\end{align}
It follows from \cite[Theorem 1.52]{Ou'04} that the operator $\widetilde{\mathcal{A}}_0$ generates an analytic $C_0$-semigroup on $\mathbb{L}^2$. We recall the following results from \cite{ACMO'19,KMMR'19}, where we proved that $\mathcal{A}_0=\widetilde{\mathcal{A}}_0$ in \cite[Proposition 1]{ACMO'19}.
\begin{proposition}
\begin{enumerate}[label=(\alph*),leftmargin=*]
\item The operator $\mathcal{A}_0$ generates an analytic semigroup $(\mathcal{T}_0(t))_{t\geq 0}$ on $\mathbb{L}^2$.
\item The operator $\mathcal{A}$ generates an analytic semigroup $(\mathcal{T}(t))_{t\geq 0}$ with angle $\dfrac{\pi}{2}$ on $\mathbb{L}^2$.
\end{enumerate}\label{prop21}
\end{proposition}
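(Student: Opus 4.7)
The strategy is to apply the sesquilinear form method, for which the scaffolding has already been laid with the bilinear form $\mathfrak{a}_0$ defined on $\mathbb{H}^1 \subset \mathbb{L}^2$. I would proceed by verifying that $\mathfrak{a}_0$ satisfies the hypotheses of the Lions--Kato form theorem (as phrased in \cite{Ou'04}), identify the associated operator with $\mathcal{A}_0$ via the result already cited from \cite{ACMO'19}, and then upgrade from $\mathcal{A}_0$ to $\mathcal{A}$ through a bounded perturbation argument.

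First, I would check that $\mathfrak{a}_0: \mathbb{H}^1 \times \mathbb{H}^1 \to \mathbb{R}$ is continuous. Using \eqref{bdcoeff}, the regularity \eqref{symA}--\eqref{symD}, and Cauchy--Schwarz applied termwise, each of the four contributions (bulk diffusion, bulk drift, surface diffusion, surface drift) is bounded by $C\|(y,y_\Gamma)\|_{\mathbb{H}^1}\|(z,z_\Gamma)\|_{\mathbb{H}^1}$. Next I would establish quasi-coercivity: the ellipticity assumptions \eqref{uellipA}--\eqref{uellipD} give the lower bound $\beta\bigl(\|\nabla y\|_{L^2(\Omega)}^2 + \|\nabla_\Gamma y_\Gamma\|_{L^2(\Gamma)}^2\bigr)$ for the principal part, while the first-order terms $-(B\cdot\nabla y)\,y$ and $-\langle b,\nabla_\Gamma y_\Gamma\rangle_\Gamma y_\Gamma$ are absorbed into the principal part via Young's inequality with a small parameter $\varepsilon<\beta$, producing a penalty of order $\|(y,y_\Gamma)\|_{\mathbb{L}^2}^2$. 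This yields
\begin{equation*}
\mathfrak{a}_0[(y,y_\Gamma),(y,y_\Gamma)] + \omega\,\|(y,y_\Gamma)\|_{\mathbb{L}^2}^2 \;\geq\; \alpha\,\|(y,y_\Gamma)\|_{\mathbb{H}^1}^2
\end{equation*}
for some $\alpha>0$ and $\omega\geq 0$. Applying \cite[Theorem 1.52]{Ou'04} gives that $\widetilde{\mathcal{A}}_0$ generates an analytic $C_0$-semigroup on $\mathbb{L}^2$, and the identification $\mathcal{A}_0=\widetilde{\mathcal{A}}_0$ from \cite[Proposition 1]{ACMO'19} delivers part (a).

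For part (b), the multiplication operator $P=\mathrm{diag}(p,q)$ is bounded on $\mathbb{L}^2$ because $p\in L^\infty(\Omega)$ and $q\in L^\infty(\Gamma)$, with norm controlled by $\max(\|p\|_\infty,\|q\|_\infty)$. The bounded perturbation theorem for generators of analytic semigroups then immediately gives that $\mathcal{A}=\mathcal{A}_0+P$ generates an analytic semigroup on $\mathbb{L}^2$. Concerning the optimal angle of analyticity $\pi/2$: the symmetric part of $\mathfrak{a}_0$ (obtained by deleting the drift terms) is associated with a self-adjoint, bounded-below operator, which generates an analytic semigroup of angle $\pi/2$; the form-theoretic treatment accommodates the drift terms and the bounded perturbation $P$ without shrinking the angle, since both appear as relatively form-bounded perturbations of arbitrarily small relative bound of the symmetric principal part (the drifts are first-order, and $P$ is zeroth-order).

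\textbf{Main obstacle.} The computations themselves are routine; the only genuine subtlety is the preservation of the optimal angle $\pi/2$ in part (b), because $\mathcal{A}$ is not self-adjoint. Absent a direct invocation of a known theorem for non-symmetric sectorial forms, one must instead control the numerical range of $\mathfrak{a}_0+P$ and show that the imaginary part $\mathrm{Im}\,\mathfrak{a}_0(Y,Y)$ is dominated by the real part up to a lower-order term, which ultimately reduces back to the same Young-type absorption used in Step 2. Once this is in hand, the angle $\pi/2$ follows from the standard sectoriality estimate for the numerical range.
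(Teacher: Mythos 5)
Your argument is correct and follows essentially the same route as the paper, which simply defers part (a) to \cite[Proposition 1]{ACMO'19} (where the form method with $\mathfrak{a}_0$ and \cite[Theorem 1.52]{Ou'04} is carried out exactly as you describe, and which the paper's own preamble to the proposition already sets up) and part (b) to \cite[Proposition 2.2]{KMMR'19} (angle preservation under perturbations of relative bound zero). The only point to tighten is in your closing paragraph: mere domination of $\mathrm{Im}\,\mathfrak{a}_0$ by $\mathrm{Re}\,\mathfrak{a}_0$ up to lower-order terms yields a numerical-range sector of positive aperture and hence an analyticity angle strictly less than $\pi/2$; to reach the full angle $\pi/2$ you need the quantitative version you stated earlier, namely $|\mathrm{Im}\,\mathfrak{a}_0[Y,Y]|\le\varepsilon\,\mathrm{Re}\,\mathfrak{a}_0[Y,Y]+C_\varepsilon\|Y\|_{\mathbb{L}^2}^2$ for every $\varepsilon>0$, which the first-order drifts and the bounded operator $P$ do satisfy by Young's inequality.
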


\begin{proof}
The first assertion was proved in \cite[Proposition 1]{ACMO'19}, while the second claim can be obtained by mimicking the proof of \cite[Proposition 2.2]{KMMR'19} (see also \cite{Mo'10, Mo'06}).
\end{proof}
Consequently, we have the following regularity result.
\begin{proposition}\label{prop2}
For all $Y_0:=(y_0,y_{0,\Gamma})\in \mathbb{L}^2$, the unique mild solution of $\left(\mathcal{E}_{p,q,Y_0}\right)$ $Y:=(y,y_{\Gamma})\in C\left([0,T]; \mathbb{L}^2\right)$ is such that $Y\in \mathbb{E}_2(\tau, T)$ for any $\tau \in (0,T)$.
\end{proposition}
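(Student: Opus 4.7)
The plan is to invoke the analytic semigroup generation of $\mathcal{A}$ on $\mathbb{L}^2$ established in Proposition \ref{prop21}(b), together with classical parabolic smoothing for the homogeneous Cauchy problem \eqref{acp}. By definition, the mild solution is $Y(t)=\mathcal{T}(t)Y_0$, and strong continuity of $(\mathcal{T}(t))_{t\ge 0}$ immediately gives $Y\in C([0,T];\mathbb{L}^2)$ together with its uniqueness; this disposes of the global assertion.

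For the interior regularity $Y\in\mathbb{E}_2(\tau,T)$, I would use the standard smoothing estimates for analytic semigroups: $\mathcal{T}(t)\mathbb{L}^2\subset D(\mathcal{A}^n)$ for all $t>0$ and $n\in\mathbb{N}$, with
$$\|\mathcal{A}^n\mathcal{T}(t)\|_{\mathcal{L}(\mathbb{L}^2)}\le M_n\, t^{-n},\qquad t\in(0,T].$$
Applied to $n=1,2$, this yields uniform bounds on $\|\mathcal{A}^k Y(t)\|_{\mathbb{L}^2}$ for $t\in[\tau,T]$ and $k=0,1,2$. In particular $Y(t)\in D(\mathcal{A}^2)$ on $[\tau,T]$, and since $D(\mathcal{A})=\mathbb{H}^2$, this forces both $Y(t)\in\mathbb{H}^2$ and $\mathcal{A}Y(t)\in\mathbb{H}^2$, the latter automatically incorporating the trace compatibility $y_{|\Gamma}=y_\Gamma$ encoded in $\mathbb{H}^2$.

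Next I would identify the time derivatives. For $t>0$, the map $t\mapsto\mathcal{T}(t)Y_0$ is smooth with $\partial_t^k Y(t)=\mathcal{A}^k\mathcal{T}(t)Y_0$, a classical consequence of analyticity. Combined with the previous step this gives
$$Y\in L^\infty(\tau,T;\mathbb{H}^2),\quad \partial_t Y=\mathcal{A}Y\in L^\infty(\tau,T;\mathbb{H}^2),\quad \partial_t^2 Y=\mathcal{A}^2 Y\in L^\infty(\tau,T;\mathbb{L}^2).$$
Since the interval $(\tau,T)$ is bounded, each $L^\infty$ bound upgrades to the corresponding $L^2$ bound, so that $Y\in H^1(\tau,T;\mathbb{H}^2)\cap H^2(\tau,T;\mathbb{L}^2)=\mathbb{E}_2(\tau,T)$, as required.

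I do not expect any substantive obstacle: the whole argument is a textbook application of parabolic smoothing, made possible precisely because Proposition \ref{prop21}(b) supplies analyticity of $(\mathcal{T}(t))_{t\ge 0}$. The only point worth verifying carefully is that the boundary/trace compatibility built into $\mathbb{H}^2$ is respected by the smoothing, and this is automatic from the identification $D(\mathcal{A})=\mathbb{H}^2$ provided by the form/semigroup construction of $\mathcal{A}_0$ recalled via $\mathfrak{a}_0$ in \eqref{fo1}--\eqref{fo2} and the bounded perturbation by $P$.
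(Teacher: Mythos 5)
Your argument is correct and rests on the same engine as the paper's proof, namely the analyticity of $(\mathcal{T}(t))_{t\ge 0}$ from Proposition \ref{prop21}, but the bootstrapping is carried out by a different route. The paper observes that analyticity places $Y(\tau)$ in $\mathbb{H}^2=D(\mathcal{A})$ and then invokes the maximal-regularity statement of \cite[Proposition 3.3, Part II, Chap.~1]{Ben'07} to get $Y\in\mathbb{E}_1(\tau,T)$, repeating the same step for $Z=\partial_t Y$ (whose initial value $Z(\tau)$ again lies in $\mathbb{H}^2$) to reach $\mathbb{E}_2(\tau,T)$. You instead work directly with the pointwise smoothing bounds $\|\mathcal{A}^n\mathcal{T}(t)\|_{\mathcal{L}(\mathbb{L}^2)}\le M_n t^{-n}$, which avoids the external citation and in fact yields the stronger conclusion that $\mathcal{A}^kY$ is bounded (indeed continuous) on $[\tau,T]$ with values in $\mathbb{L}^2$ for $k=0,1,2$, before descending to $L^2$ in time. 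The one step you leave implicit is the passage from bounds on $\|Y(t)\|_{\mathbb{L}^2}+\|\mathcal{A}Y(t)\|_{\mathbb{L}^2}$ (resp.\ on $\|\mathcal{A}Y(t)\|_{\mathbb{L}^2}+\|\mathcal{A}^2Y(t)\|_{\mathbb{L}^2}$) to bounds in the $\mathbb{H}^2$-norm needed for $Y,\partial_tY\in L^\infty(\tau,T;\mathbb{H}^2)$: this requires that the graph norm of $\mathcal{A}$ be equivalent to the $\mathbb{H}^2$-norm on $D(\mathcal{A})=\mathbb{H}^2$, which follows from the open mapping theorem because $\mathcal{A}\colon\mathbb{H}^2\to\mathbb{L}^2$ is bounded and $\mathcal{A}$ is closed as a generator. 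With that remark made explicit, both arguments are complete; yours is the more self-contained, while the paper's is shorter because it delegates the quantitative smoothing to the cited proposition.
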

\begin{proof}
Let $\tau \in (0,T)$. Since $(\mathcal{T}(t))_{t\ge 0}$ is an analytic semigroup, the solution $Y(t)$ is twice differentiable and solves (ACP) on $(\tau, T)$ with initial data $Y(\tau)$. The analyticity of $(\mathcal{T}(t))_{t\ge 0}$ implies that $Y(\tau)\in \mathbb{H}^2$. Then \cite[Proposition 3.3, Part II, Chap. 1]{Ben'07} yields that $Y\in \mathbb{E}_1(\tau, T)$. Similarly, the function $Z(t)=\partial_t Y(t)$ solves (ACP) on $(\tau, T)$ with initial data $Z(\tau)\in \mathbb{H}^2$. The same previous argument implies that $Z\in \mathbb{E}_1(\tau, T)$, i.e., $Y \in \mathbb{E}_2(\tau, T)$.
\end{proof}

Next, we establish some positivity results of the associated semigroups for future use.
\begin{proposition}\label{proppos1}
The $C_0$-semigroups $(\mathcal{T}_0(t))_{t\geq 0}$ and $(\mathcal{T}(t))_{t\geq 0}$ generated respectively by $\mathcal{A}_0$ and $\mathcal{A}$ are positive. Moreover, the $C_0$-semigroup $(\mathcal{T}_0(t))_{t\geq 0}$ is Markovian, i.e., it is positive and $\mathcal{T}_0(t)(1,1)=(1,1)$ for all $t\geq 0$.
\end{proposition}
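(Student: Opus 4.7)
The plan is to base positivity on the bilinear-form approach already used for the generation result, applying Ouhabaz's invariance criterion for non-symmetric forms \cite[Thm.~2.2]{Ou'04}. Recall that, with the sign convention of \eqref{fo1}--\eqref{fo2}, the operator $-\mathcal{A}_0$ is the one associated with $\mathfrak{a}_0$, and the criterion asserts that the semigroup generated by $\mathcal{A}_0$ leaves the positive cone of $\mathbb{L}^2$ invariant if and only if, for every $\mathfrak{u}=(y,y_\Gamma)\in D(\mathfrak{a}_0)=\mathbb{H}^1$, one has $\mathfrak{u}^+\in \mathbb{H}^1$ and $\mathfrak{a}_0[\mathfrak{u}^+,\mathfrak{u}^-]\leq 0$.

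First I would check that $\mathbb{H}^1$ is a sublattice: for $\mathfrak{u}=(y,y_\Gamma)\in \mathbb{H}^1$, the truncations $y^{\pm}$ belong to $H^1(\Omega)$, $y_\Gamma^{\pm}\in H^1(\Gamma)$, and the compatibility $(y^{\pm})_{|\Gamma}=(y_{|\Gamma})^{\pm}=y_\Gamma^{\pm}$ holds, so $\mathfrak{u}^{\pm}\in \mathbb{H}^1$. Next, I would evaluate $\mathfrak{a}_0[\mathfrak{u}^+,\mathfrak{u}^-]$ using the standard identities $\nabla y^{\pm}=\pm\mathbf{1}_{\{\pm y>0\}}\nabla y$ (and their tangential analogues). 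These yield $\nabla y^+\cdot \nabla y^-=0$ and $\nabla_\Gamma y_\Gamma^+\cdot \nabla_\Gamma y_\Gamma^-=0$ almost everywhere, so the principal parts vanish. For the drift terms, the supports of $\nabla y^+$ and $y^-$ are essentially disjoint, whence $(B\cdot \nabla y^+)\,y^-=0$ a.e. in $\Omega$ and $\langle b,\nabla_\Gamma y_\Gamma^+\rangle_\Gamma\, y_\Gamma^-=0$ a.e.\ on $\Gamma$. Therefore $\mathfrak{a}_0[\mathfrak{u}^+,\mathfrak{u}^-]=0\leq 0$, and Ouhabaz's criterion delivers positivity of $(\mathcal{T}_0(t))_{t\geq 0}$.

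For $(\mathcal{T}(t))_{t\geq 0}$, the cleanest route is to repeat the same verification for the perturbed form $\mathfrak{a}[\mathfrak{u},\mathfrak{v}]=\mathfrak{a}_0[\mathfrak{u},\mathfrak{v}]-\int_\Omega p\, y z\,\d x-\int_\Gamma q\, y_\Gamma z_\Gamma\,\d S$ associated with $\mathcal{A}=\mathcal{A}_0+P$: the additional terms evaluated on $(\mathfrak{u}^+,\mathfrak{u}^-)$ reduce to $-\int_\Omega p\, y^+ y^-\,\d x - \int_\Gamma q\, y_\Gamma^+ y_\Gamma^-\,\d S=0$, since $y^+y^-\equiv 0$ and likewise on $\Gamma$. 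Equivalently, one can shift by $\lambda:=\max(\|p\|_\infty,\|q\|_\infty)$ so that $P+\lambda I$ becomes a positive bounded operator, apply the Dyson--Phillips series to the positive semigroup generated by $\mathcal{A}_0-\lambda I$, and multiply back by $e^{\lambda t}$.

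Finally, for the Markovian property, I would simply observe that the constant pair $(1,1)$ lies in $\mathbb{H}^2=D(\mathcal{A}_0)$ (trivially compatible since $1_{|\Gamma}=1$), and a direct computation gives $\mathcal{A}_0(1,1)=(0,0)$: each ingredient involves either $\nabla 1=0$, $\nabla_\Gamma 1=0$, or the conormal derivative $\partial_\nu^A 1=0$. Hence the function $t\mapsto (1,1)$ solves the abstract Cauchy problem \eqref{acp} (with $P$ replaced by $0$) for initial datum $(1,1)$, and uniqueness forces $\mathcal{T}_0(t)(1,1)=(1,1)$ for all $t\geq 0$. I expect the main obstacle, if any, to be the careful chain-rule justification ensuring that $\nabla y^+$ and $y^-$ have disjoint supports in the boundary setting (where the tangential gradient must be handled on the Riemannian manifold $\Gamma$); this is by now standard via the Stampacchia lemma applied in local charts, so no genuinely new difficulty arises.
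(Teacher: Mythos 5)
Your argument is correct and follows essentially the same route as the paper: positivity of $(\mathcal{T}_0(t))_{t\geq 0}$ via Ouhabaz's form criterion together with $\nabla y^{\pm}=\pm\mathds{1}_{\{\pm y>0\}}\nabla y$, and the Markov property from $\mathcal{A}_0(1,1)=(0,0)$. The only point of divergence is the positivity of $(\mathcal{T}(t))_{t\geq 0}$: the paper deduces it from that of $(\mathcal{T}_0(t))_{t\geq 0}$ via the Trotter product formula with the positive diagonal semigroup $\mathrm{e}^{tP}$, whereas you verify the criterion directly for the perturbed form (or invoke a Dyson--Phillips/shift argument) --- all of these are standard and equally valid here.
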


\begin{proof}
Since the form $\mathfrak{a}_0$ is real, by \cite[Theorem 2.6]{Ou'04} the positivity of the associated $C_0$-semigroup is equivalent to the following assertion
$$\text{For all } \mathfrak{u}\in \mathbb{H}^1, \text{ we have }\mathfrak{u}^+ \in \mathbb{H}^1 \text{ and } \mathfrak{a}_0(\mathfrak{u}^+, \mathfrak{u}^-)\leq 0.$$
Let $\mathfrak{u}=(y,y_\Gamma) \in \mathbb{H}^1$, then $\mathfrak{u}^+=(y^+,y_\Gamma^+)=(y^+,(y^+)_{|\Gamma}) \in \mathbb{H}^1$. We further have $\nabla y^+=\mathds{1}_{\{y>0\}}\nabla y$ and $\nabla y^-=-\mathds{1}_{\{y<0\}}\nabla y$ (see \cite[Lemma 7.6]{GT'77}). Hence, $\mathfrak{a}_0(\mathfrak{u}^+, \mathfrak{u}^-)=0$ and the $C_0$-semigroup $(\mathcal{T}_0(t))_{t\geq 0}$ is positive. The $C_0$-semigroup generated by the bounded diagonal operator $P=\begin{pmatrix}p  & 0\\ 0 & q\end{pmatrix}$ is given by $\mathrm{e}^{tP}=\begin{pmatrix}\mathrm{e}^{tp}  & 0\\ 0 & \mathrm{e}^{tq}\end{pmatrix}$, hence it is positive. Then, the $C_0$-semigroup $(\mathcal{T}(t))_{t\geq 0}$ generated by $\mathcal{A}$ is 
given by the Trotter product formula
$$\mathcal{T}(t)(y, y_\Gamma)=\lim_{n \to \infty} \left[\mathcal{T}_0\left(\frac{t}{n}\right) \mathrm{e}^{\frac{t}{n} P} \right]^n (y, y_\Gamma)$$
for all $t\geq 0$ and $(y, y_\Gamma) \in \mathbb{L}^2$. It follows that $(\mathcal{T}(t))_{t\geq 0}$ is also positive. To prove that $\mathcal{T}_0(t)(1,1)=(1,1)$, for all $t\geq 0$, it suffices to see that $\mathcal{A}_0 (1,1)=(0,0)$ which holds by definition of $\mathcal{A}_0$.
\end{proof}

As a direct corollary of Proposition \ref{proppos1}, we have the following result.
\begin{corollary}\label{corpos1}
Consider an initial data $(y_0, y_{0,\Gamma}) \in \mathbb{L}^2$ such that $y_0\geq 0$ on $\Omega$ and $y_{0,\Gamma} \geq 0$ on $\Gamma$. Then, the system $\left(\mathcal{E}_{p,q,Y_0}\right)$ admits a unique mild solution $Y=(y, y_\Gamma) \in C\left([0,T]; \mathbb{L}^2\right)$ such that $y \geq 0$ on $\Omega_T$ and $y_\Gamma \geq 0$ on $\Gamma_T$. 
\end{corollary}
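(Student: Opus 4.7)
The plan is to derive the result as an immediate consequence of Proposition \ref{prop21} and Proposition \ref{proppos1}. The semigroup $(\mathcal{T}(t))_{t\geq 0}$ generated by $\mathcal{A}$ on $\mathbb{L}^2$ provides, for any $Y_0\in \mathbb{L}^2$, the unique mild solution $Y(t)=\mathcal{T}(t)Y_0$ with $Y\in C([0,T];\mathbb{L}^2)$. Uniqueness and the continuous dependence on $Y_0$ are standard consequences of the generation theorem, so the only thing left to establish is the componentwise nonnegativity of the solution.

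For the positivity part, I would invoke the structure of $\mathbb{L}^2$ as a Hilbert lattice, whose positive cone is exactly the product of the positive cones of $L^2(\Omega)$ and $L^2(\Gamma)$, as recalled in Section \ref{sec2}. Thus the hypothesis $y_0\geq 0$ on $\Omega$ and $y_{0,\Gamma}\geq 0$ on $\Gamma$ is equivalent to saying that $Y_0$ lies in the positive cone of $\mathbb{L}^2$. Applying the positivity of $(\mathcal{T}(t))_{t\geq 0}$ from Proposition \ref{proppos1}, we get $Y(t)=\mathcal{T}(t)Y_0\geq 0$ in $\mathbb{L}^2$ for every $t\in[0,T]$, which by the product structure of the positive cone means $y(t,\cdot)\geq 0$ a.e.\ on $\Omega$ and $y_\Gamma(t,\cdot)\geq 0$ a.e.\ on $\Gamma$.

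To pass from this pointwise-in-$t$ nonnegativity to the nonnegativity on $\Omega_T$ and $\Gamma_T$, I would note that $y$ is (up to a measure-zero set of times) a measurable function on $\Omega_T$, and the set $\{(t,x)\in \Omega_T \colon y(t,x)<0\}$ has measure zero by Fubini, since its $t$-slices are null; the same argument applies to $y_\Gamma$ on $\Gamma_T$. Hence $y\geq 0$ on $\Omega_T$ and $y_\Gamma\geq 0$ on $\Gamma_T$ in the a.e.\ sense, which is the conclusion.

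No serious obstacle is expected here; the proof is a packaging of two earlier results. The only technical point worth being careful about is distinguishing between pointwise-in-time and full space-time nonnegativity, which is handled by the Fubini argument above; all other steps are direct invocations of Propositions \ref{prop21} and \ref{proppos1}.
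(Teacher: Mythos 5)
Your proof is correct and follows essentially the same route as the paper, which presents this result as a direct corollary of Proposition \ref{proppos1} without further argument: the mild solution is $Y(t)=\mathcal{T}(t)Y_0$, and positivity of the semigroup together with the product structure of the positive cone of $\mathbb{L}^2$ gives the componentwise nonnegativity. The Fubini step you add to pass from nonnegativity for each $t$ to a.e.\ nonnegativity on $\Omega_T$ and $\Gamma_T$ is a reasonable extra precaution that the paper leaves implicit.
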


Henceforth, all the inequalities between measurable functions are often understood in the ``almost everywhere$"$ sense. For fixed positive constants $r$ and $R$, we recall the set of admissible initial data
\begin{align}
\mathcal{I}:=\{Y_0=(y_0, y_{0,\Gamma}) \in \mathbb{W}^{2,\infty} \colon y_0, y_{0,\Gamma}\geq r, \|Y_0\|_{2,\infty} \leq R\} \label{init1}\
\end{align}
and the set of admissible potentials
\begin{align}
\mathcal{P}:=\{(p,q) \in \mathbb{L}^\infty \colon \|p\|_\infty ,\|q\|_\infty \leq R\}.
\end{align}

In order to control some terms in the proof of the stability estimate, we shall use the following positivity and boundedness results.
\begin{lemma}\label{lempos1}
Let $Y_0=(y_0, y_{0,\Gamma}) \in \mathcal{I}$ and $(p,q) \in \mathcal{P}$. Then, the following assertions hold
\begin{enumerate}[label=(\roman*),leftmargin=*]
\item The solution $Y=(y,y_\Gamma)$ of $\left(\mathcal{E}_{p,q,Y_0}\right)$ satisfies
$$y(t,x) \geq \mathrm{e}^{-Rt} r >0 \text{ on } \Omega_T \quad\text{ and } \quad y_\Gamma(t,x) \geq \mathrm{e}^{-Rt} r >0 \text{ on } \Gamma_T .$$
\item There exists a positive constant $C=C\left(T, R\right)$ so that the solution of $\left(\mathcal{E}_{p,q,Y_0}\right)$ satisfies $\partial_t Y\in \mathbb{L}^{\infty}_T$ and
\begin{equation}
\|\partial_t Y\|_{\mathbb{L}^\infty_T} \leq C. \label{bd1}
\end{equation}
\end{enumerate}
\end{lemma}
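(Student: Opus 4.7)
The plan is to deduce both assertions from a single idea: compare $Y$ (for~(i)), respectively $\partial_t Y$ (for~(ii)), with explicit barriers of the form $c\,\mathrm{e}^{\alpha t}(1,1)$, and exploit the positivity of the semigroup $(\mathcal{T}(t))_{t\geq 0}$ granted by Proposition~\ref{proppos1} together with Duhamel's formula for inhomogeneous Cauchy problems. The key algebraic observation is that $\mathcal{A}_0(1,1)=(0,0)$ by the very form of $\mathcal{A}_0$, so that $\mathcal{A}(1,1)=P(1,1)=(p,q)$.

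For~(i), I would set $W(t):=Y(t)-r\mathrm{e}^{-Rt}(1,1)$. A direct computation using $\mathcal{A}(1,1)=(p,q)$ gives
\[\partial_t W - \mathcal{A}W \;=\; r\mathrm{e}^{-Rt}\,(R+p,\;R+q),\]
which is nonnegative in the $\mathbb{L}^2$-lattice sense since $\|p\|_\infty,\|q\|_\infty\leq R$, while $W(0)=(y_0-r,\,y_{0,\Gamma}-r)\geq 0$ by the definition of $\mathcal{I}$. Writing $W$ as a mild solution via Duhamel's formula and invoking the positivity of $(\mathcal{T}(t))_{t\geq 0}$ then forces $W\geq 0$, which is precisely the pointwise lower bound on $y$ and $y_\Gamma$.

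For~(ii), since $Y_0\in\mathbb{W}^{2,\infty}\subset D(\mathcal{A})=\mathbb{H}^2$ and the semigroup is analytic, $Z:=\partial_t Y$ is the classical solution of $\partial_t Z=\mathcal{A}Z$ with $Z(0)=\mathcal{A}Y_0$. The regularity $Y_0\in\mathbb{W}^{2,\infty}$, combined with $A\in C^1(\overline\Omega)$, $D\in C^1(\Gamma)$, and \eqref{bdcoeff}, gives $Z(0)\in\mathbb{L}^\infty$ with $M:=\|Z(0)\|_{\mathbb{L}^\infty}\leq C(R)$, where $C(R)$ absorbs the $C^1$ and $L^\infty$ norms of the fixed coefficients $A,B,D,b$. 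I would then compare $Z$ against the two barriers $\pm M\mathrm{e}^{Rt}(1,1)$: the functions $W_\pm:=M\mathrm{e}^{Rt}(1,1)\mp Z$ satisfy $W_\pm(0)\geq 0$ and
\[\partial_t W_\pm - \mathcal{A}W_\pm \;=\; M\mathrm{e}^{Rt}\,(R-p,\;R-q)\;\geq\;0,\]
so the same Duhamel argument applied to $W_\pm$ yields $W_\pm\geq 0$, hence $|Z|\leq M\mathrm{e}^{Rt}$ pointwise a.e. This gives~\eqref{bd1} with $C=M\mathrm{e}^{RT}$ depending only on $T$, $R$ (and the fixed data).

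I expect the main technical point to be a careful translation of the $\mathbb{L}^2$-lattice inequalities into pointwise a.e.\ inequalities: one has to check that the barriers $r\mathrm{e}^{-Rt}(1,1)$ and $M\mathrm{e}^{Rt}(1,1)$ belong to $\mathbb{W}^{2,\infty}\subset D(\mathcal{A})$ (the compatibility $y_{|\Gamma}=y_\Gamma$ being trivial for constants), that the source terms lie in $L^\infty(0,T;\mathbb{L}^\infty)\subset L^2(0,T;\mathbb{L}^2)$ and are nonnegative in the product-lattice sense, and that the mild solution obtained by Duhamel's formula with a positive semigroup, nonnegative initial data and nonnegative forcing is itself nonnegative. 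These points are standard but worth spelling out, and no maximum principle beyond the one encoded in the positivity of $(\mathcal{T}(t))_{t\geq 0}$ is needed, which is crucial because the coefficients $p,q,B,b$ are only $L^\infty$.
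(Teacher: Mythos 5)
Your argument is correct, and it rests on exactly the two pillars the paper uses: the positivity of the semigroups from Proposition~\ref{proppos1} and the identity $\mathcal{A}_0(1,1)=(0,0)$, so that $\mathcal{A}(1,1)=(p,q)$. The implementation differs in the details. For (i) the paper first rescales, setting $Z=\mathrm{e}^{Rt}Y$, invokes Corollary~\ref{corpos1} to get $Z\geq 0$, and then uses Duhamel together with the \emph{Markov} property $\mathcal{T}_0(t)(1,1)=(1,1)$ to conclude $Z\geq (r,r)$; you instead subtract the barrier $r\mathrm{e}^{-Rt}(1,1)$ and observe that the resulting forcing $r\mathrm{e}^{-Rt}(R+p,R+q)$ is nonnegative, which removes the need to establish $Z\geq 0$ as a preliminary step. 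For (ii) both proofs reduce to an $L^\infty$ bound on solutions with $L^\infty$ data (with $\|\mathcal{A}Y_0\|_{\mathbb{L}^\infty}\leq C(R)$ coming from $\|Y_0\|_{2,\infty}\leq R$ and the boundedness of the coefficients), but the paper obtains that bound via the Trotter product formula for $\mathcal{T}_0(t/n)\mathrm{e}^{(t/n)(P-RI)}$, whereas you compare $\partial_t Y$ with the barriers $\pm M\mathrm{e}^{Rt}(1,1)$ and apply the same Duhamel argument; note the paper only writes out the upper bound, so in either version one should apply the comparison to $\pm Z$ as you do. Your route is the more uniform one (a single comparison principle covers both parts and avoids Trotter), at the modest cost of verifying, as you note, that Duhamel's formula with a positive semigroup, nonnegative datum and a.e.\ nonnegative integrable forcing produces a solution in the closed positive cone of $\mathbb{L}^2$ — which is immediate. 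I see no gap.
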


\begin{proof}
$(i)$ It is clear that $Z:=(z,z_\Gamma)=\mathrm{e}^{Rt} Y=\left(\mathrm{e}^{Rt}y,\mathrm{e}^{Rt} y_\Gamma\right)$ is the solution of
\begin{empheq}[left = \empheqlbrace]{alignat=2}
\begin{aligned}
&\partial_t z =\dv(A(x) \nabla z) + B(x)\cdot \nabla z +(R+p(x))z, &\text{ in } \Omega_T ,\\
&\partial_t z_{\Gamma}=\dv_{\Gamma} (D(x)\nabla_\Gamma z_{\Gamma}) -\partial_{\nu}^A z + \langle b(x), \nabla_\Gamma z_{\Gamma} \rangle_{\Gamma} +(R+q(x)) z_{\Gamma}, &\text{on } \Gamma_T,\\
&z_{\Gamma}(t,x)= z_{|\Gamma}(t,x), &\text{ on } \Gamma_T,\\
&(z,z_{\Gamma})\rvert_{t=0}=(y_0, y_{0,\Gamma}), &\Omega\times\Gamma.
\end{aligned}
\end{empheq}
By Corollary \ref{corpos1}, $Z \geq 0$. Since $R+p \geq 0$ on $\Omega$, $R+q \geq 0$ on $\Gamma$ and the $C_0$-semigroup $(\mathcal{T}_0(t))_{t\geq 0}$ is Markovian, by Proposition \ref{proppos1}, it follows that
\begin{align*}
Z(t) &=\mathcal{T}_0(t)Y_0 +\int_0^t \mathcal{T}_0(t-\tau)[(R+p)z(\tau), (R+q)z_\Gamma(\tau)] \,\d \tau \\
& \geq \mathcal{T}_0(t)Y_0 \\
& \geq \mathcal{T}_0(t)(r, r) = r \mathcal{T}_0(t)(1,1)=(r, r).
\end{align*}
Hence, $Y(t)=\mathrm{e}^{-Rt} Z(t) \geq \mathrm{e}^{-Rt}(r, r)$.

$(ii)$ It suffices to prove that, for any $Y_0 \in \mathbb{L}^{\infty}$, we have $Y\in \mathbb{L}^{\infty}_T$ and there exists a positive constant $C=C\left(T, R\right)$ such that 
\begin{equation}
\|Y\|_{\mathbb{L}^\infty_T} \leq C \|Y_0\|_{\mathbb{L}^{\infty}}. \label{bd2}
\end{equation}

Since $p-R \leq 0$ on $\Omega$ and $q-R \leq 0$ on $\Gamma$, for all $(u,u_\Gamma) \in \mathbb{L}^2$ such that $u \ge 0$ on $\Omega$ and $u_\Gamma \ge 0$ on $\Gamma$, we have
\begin{align*}
\mathrm{e}^{\frac{t}{n} \left(P-R I_{\mathbb{L}^2}\right)} (u,u_\Gamma) & = \begin{pmatrix}\mathrm{e}^{\frac{t}{n}(p-R)}  & 0\\ 0 & \mathrm{e}^{\frac{t}{n}(q-R)}\end{pmatrix} (u,u_\Gamma) \\
& = \left(\mathrm{e}^{\frac{t}{n}(p-R)} u, \mathrm{e}^{\frac{t}{n}(q-R)}u_{\Gamma}\right) \\
& \le (u,u_\Gamma)
\end{align*}
for all $n\in \mathbb{N}$ and all $t\in [0,T]$. Then the positivity of $(\mathcal{T}_0(t))_{t\geq 0}$ implies that $$\mathcal{T}_{0}\left(\frac{t}{n}\right) \mathrm{e}^{\frac{t}{n} \left(P-R I_{\mathbb{L}^2}\right)} (y_0, y_{0,\Gamma}) \le \mathcal{T}_{0}\left(\frac{t}{n}\right) (y_0, y_{0,\Gamma}).$$
Iterating the process, we obtain that $$\left[\mathcal{T}_{0}\left(\frac{t}{n}\right) \mathrm{e}^{\frac{t}{n}\left(P-R I_{\mathbb{L}^2}\right)}\right]^{n}\left(y_{0}, y_{0, \Gamma}\right) \leq \left[\mathcal{T}_{0}\left(\frac{t}{n}\right)\right]^{n}\left(y_{0}, y_{0, \Gamma}\right)=\mathcal{T}_0(t)(y_0, y_{0,\Gamma})$$
for all $n\in \mathbb{N}$ and all $t\in [0,T]$.
By the Trotter product formula, for all $t\in [0,T]$, we deduce
\begin{align*}
\mathrm{e}^{-Rt} Y(t)&=\lim_{n \to \infty} \left[\mathcal{T}_0\left(\frac{t}{n}\right) \mathrm{e}^{\frac{t}{n} \left(P-R I_{\mathbb{L}^2}\right)} \right]^n (y_0, y_{0,\Gamma})\\
& \leq \mathcal{T}_0(t)(y_0, y_{0,\Gamma})\\
& \leq \mathcal{T}_0(t)(\|Y_0\|_{\mathbb{L}^{\infty}}, \|Y_0\|_{\mathbb{L}^{\infty}})\\
& = \|Y_0\|_{\mathbb{L}^{\infty}} \mathcal{T}_0(t)(1,1)=\left(\|Y_0\|_{\mathbb{L}^{\infty}}, \|Y_0\|_{\mathbb{L}^{\infty}}\right).
\end{align*}
Finally, $Y(t)\leq \mathrm{e}^{RT} \left(\|Y_0\|_{\mathbb{L}^{\infty}}, \|Y_0\|_{\mathbb{L}^{\infty}}\right)$ for all $t\in [0,T]$.
\end{proof}

\section{Stability estimates for radiative potentials and initial data}\label{sec3}
\subsection{Carleman estimate}
We start by recalling the following lemma from \cite{FI'96}, which is a key tool to construct the weight functions needed in the Carleman estimate.
\begin{lemma}\label{lm1}
Let $\omega \Subset \Omega$ be a nonempty open subset. Then, there exists a function $\eta^0\in C^2(\overline{\Omega})$ such that       
\begin{align*}
\eta^0 &> 0  \;\; \text{ in } \Omega \quad\text{ and }\quad|\nabla\eta^0|  > 0 \;\;\, \qquad\text{ in } \overline{\Omega\backslash\omega},\\
\eta^0 &=0 \;\;\text{ on } \Gamma \quad\text{ and } \quad\partial_\nu \eta^0 \leq -c <0 \;\; \text{ on } \Gamma
\end{align*}
for some constant $c>0$.
\end{lemma}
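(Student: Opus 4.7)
My plan is to follow the classical construction from Fursikov--Imanuvilov \cite{FI'96}. The strategy proceeds in three layers: produce a first function with the correct boundary behavior (zero on $\Gamma$, positive inside, with strictly negative normal derivative), replace it by a Morse approximation so that the critical set becomes finite, and then transport those critical points into $\omega$ by a diffeomorphism of $\overline{\Omega}$ that is the identity near $\Gamma$.

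First, I would fix $\omega_0$ with $\omega_0 \Subset \omega$ and construct an initial candidate $\psi\in C^2(\overline{\Omega})$ satisfying $\psi=0$ on $\Gamma$, $\psi>0$ in $\Omega$, and $\partial_\nu \psi \leq -c_0<0$ on $\Gamma$. A convenient concrete choice is the solution of $-\Delta \psi = 1$ in $\Omega$ with $\psi = 0$ on $\Gamma$: the strong maximum principle yields $\psi > 0$ in $\Omega$, Hopf's lemma gives $\partial_\nu \psi \leq -c_0 < 0$ on $\Gamma$, and standard elliptic regularity produces the $C^2$-regularity up to the boundary since $\Gamma$ is $C^2$. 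The defect of this $\psi$ is that it generically has critical points in $\Omega\setminus \omega$, which obstructs the condition $|\nabla \eta^0|>0$ on $\overline{\Omega\setminus \omega}$.

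Second, by density of Morse functions in $C^2(\overline{\Omega})$, I would approximate $\psi$ by a Morse function $\widetilde{\psi}$ close enough in $C^2$-norm that the strict inequality $\partial_\nu \widetilde{\psi} \leq -c_0/2<0$ on $\Gamma$ persists and $\widetilde{\psi}>0$ in the interior survives; after a small correction in a boundary collar one may assume $\widetilde{\psi}=0$ on $\Gamma$ exactly. Being Morse, $\widetilde{\psi}$ has only finitely many critical points $\{x_1,\dots,x_k\}$, all located in the open interior of $\Omega$ (none on $\Gamma$ because $\partial_\nu \widetilde{\psi}\neq 0$ there). Finally, since $\Omega$ is connected and $\omega_0\Subset \omega$ is nonempty and open, I would build a $C^2$-diffeomorphism $\Phi:\overline{\Omega}\to \overline{\Omega}$ equal to the identity in a neighborhood $V$ of $\Gamma$ such that $\Phi(x_i)\in \omega_0$ for every $i=1,\dots,k$. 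Concretely, $\Phi$ is realized as the time-one map of a smooth vector field compactly supported in $\Omega\setminus \overline{V}$ whose integral curves carry each $x_i$ to a chosen target inside $\omega_0$; such a field exists because $\Omega\setminus \overline{V}$ is path-connected and the critical set is finite. Setting $\eta^0 := \widetilde{\psi}\circ \Phi^{-1}$ yields the desired function: $\eta^0=0$ and $\partial_\nu \eta^0=\partial_\nu \widetilde{\psi}\leq -c<0$ on $\Gamma$ since $\Phi=\mathrm{Id}$ near $\Gamma$; $\eta^0>0$ in $\Omega$; and since $\nabla \eta^0(x) = (D\Phi^{-1}(x))^{\top}\nabla \widetilde{\psi}(\Phi^{-1}(x))$ vanishes exactly on $\Phi(\{x_1,\dots,x_k\})\subset \omega_0\subset \omega$, we obtain $|\nabla \eta^0|>0$ on $\overline{\Omega\setminus \omega}$.

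The main obstacle will be the construction of $\Phi$ with all three required features simultaneously: it must be the identity near $\Gamma$ (to preserve the Hopf-type condition), a global $C^2$-diffeomorphism of $\overline{\Omega}$, and it must move every critical point into $\omega_0$. The ODE-flow recipe handles this in principle, but verifying that the resulting map is a diffeomorphism up to the boundary (not merely a local one) and that the gluing between the identity region $V$ and the support of the vector field does not create new critical points or destroy injectivity requires careful bookkeeping, typically done by taking the vector field and its flow-time sufficiently small and invoking the inverse function theorem with uniform estimates.
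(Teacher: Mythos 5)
The paper gives no proof of this lemma: it is recalled verbatim from Fursikov--Imanuvilov \cite{FI'96}, and your outline reproduces exactly the standard Morse-theoretic construction used there (a boundary-adapted positive function, a Morse perturbation to reduce the critical set to finitely many interior points, and a diffeomorphism equal to the identity near $\Gamma$ sweeping those points into $\omega$), so the overall strategy is the right one. Two technical points in your write-up need repair. First, on a domain whose boundary is only $C^2$, the Dirichlet solution of $-\Delta\psi=1$ lies in $C^\infty(\Omega)\cap C^{1,\alpha}(\overline{\Omega})\cap W^{2,p}(\Omega)$ for every $p<\infty$, but Schauder theory requires $\Gamma\in C^{2,\alpha}$ to conclude $\psi\in C^{2}(\overline{\Omega})$; the safer route is to build the boundary layer from the distance function $d(\cdot,\Gamma)$, which is $C^2$ in a collar of a $C^2$ boundary and satisfies $\partial_\nu d=-1$ on $\Gamma$, glued to a positive constant in the interior. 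Second, a globally $C^2$-small Morse perturbation can destroy both the positivity of $\psi$ (which degenerates to $0$ at $\Gamma$) and the exact boundary condition; the standard fix, which also removes your ad hoc ``correction in a boundary collar,'' is to perturb only on a compact set disjoint from a collar $V$ of $\Gamma$ in which $|\nabla\psi|>0$ already holds by continuity of $\partial_\nu\psi\le -c_0<0$, leaving $\psi$ untouched on $V$. With these adjustments, and using $N\ge 2$ so that the finitely many critical points can be carried into $\omega_0$ one at a time by time-one flows of compactly supported vector fields, your argument is complete and coincides with the proof in the cited reference.
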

Let $t_0, t_1 \in \mathbb{R}$, $0<t_0<t_1 \leq T$. Consider the following weight functions
\begin{align*}
\alpha(t,x)&=\frac{\mathrm{e}^{2\lambda \|\eta^0\|_\infty}- \mathrm{e}^{\lambda \eta^0(x)}}{\gamma(t)},\\
\xi(t,x)&=\frac{\mathrm{e}^{\lambda \eta^0(x)}}{\gamma(t)}, \qquad \gamma(t)=(t-t_0)(t_1-t)
\end{align*}
for all $(t,x)\in \overline{\Omega}_{t_0, t_1}$, where $\lambda \geq 1$ is a large parameter (to fix later) which depends on $\Omega$ and $\omega$. In the following lemma, we collect some properties of $\alpha$ and $\xi$ that will be useful in the sequel (see \cite{ACMO'19}).
\begin{lemma}
\begin{enumerate}[label=(\alph*),leftmargin=*] The functions $\alpha$ and $\xi$ satisfy the following properties.
\item $\alpha$ and $\xi$ are positive on $\overline{\Omega}_{t_0, t_1}:=(t_0,t_1)\times \overline{\Omega}$.
\item $|\partial_t \alpha| \leq CT\xi^2$ and $\nabla_\Gamma \alpha =\nabla_\Gamma \xi =0 \; \text{ on }\Gamma$.
\item $\xi \geq \dfrac{4}{(t_1 -t_0)^2}$ and $\xi \leq \dfrac{(t_1 -t_0)^4}{16} \xi^3$.
\item $t\mapsto\alpha(t, \cdot)$ attains its minimum at $\theta=\dfrac{t_0 +t_1}{2}$.
\item For $s>0$, $\inf\limits_{x\in \Omega} \mathrm{e}^{-2s\alpha(\theta, x)} >0$ and $\sup\limits_{(t,x)\in \overline{\Omega}_{t_0, t_1}} \mathrm{e}^{-2s\alpha(t,x)} \xi^3(t,x) < +\infty$.
\end{enumerate}
\end{lemma}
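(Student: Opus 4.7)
The plan is to verify each of the five claims by direct computation from the explicit formulas for $\alpha$ and $\xi$, relying on the properties of $\eta^0$ granted by Lemma~\ref{lm1} and the elementary behavior of the polynomial $\gamma(t)=(t-t_0)(t_1-t)$ on the open interval $(t_0,t_1)$. Apart from one analytic observation in item (e), everything reduces to bookkeeping.

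For (a), I would note that $\gamma(t)>0$ on $(t_0,t_1)$, while $\eta^0\ge 0$ on $\overline{\Omega}$ yields $e^{\lambda\eta^0(x)}\le e^{\lambda\|\eta^0\|_\infty}<e^{2\lambda\|\eta^0\|_\infty}$, so both numerators are strictly positive. For (b), differentiating in $t$ gives
\[
\partial_t\alpha(t,x)=-\bigl(e^{2\lambda\|\eta^0\|_\infty}-e^{\lambda\eta^0(x)}\bigr)\frac{\gamma'(t)}{\gamma(t)^2},
\]
and since $|\gamma'(t)|=|t_0+t_1-2t|\le t_1-t_0\le T$ and $\xi^2\ge 1/\gamma^2$, the bound $|\partial_t\alpha|\le CT\xi^2$ follows with $C=e^{2\lambda\|\eta^0\|_\infty}$. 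The vanishing of $\nabla_\Gamma\alpha$ and $\nabla_\Gamma\xi$ on $\Gamma$ is immediate from $\nabla_\Gamma\eta^0=0$ on $\Gamma$, which itself holds because $\eta^0$ is identically zero on $\Gamma$ and the tangential gradient records only variations along $\Gamma$.

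Item (c) uses the elementary bound $\gamma(t)\le (t_1-t_0)^2/4$, with equality at $t=\theta$. Combined with $e^{\lambda\eta^0}\ge 1$, this yields $\xi\ge 4/(t_1-t_0)^2$; squaring and multiplying by $\xi$ gives the second inequality. For (d), at fixed $x$ the map $t\mapsto\alpha(t,x)$ equals $c(x)/\gamma(t)$ with $c(x)>0$, so it is minimized exactly where $\gamma$ is maximized, namely at the midpoint $\theta=(t_0+t_1)/2$.

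The only step requiring a moment of care is (e). For the first claim, at $t=\theta$ one has $\gamma(\theta)=(t_1-t_0)^2/4$ and $e^{2\lambda\|\eta^0\|_\infty}-e^{\lambda\eta^0(x)}\le e^{2\lambda\|\eta^0\|_\infty}$ uniformly in $x\in\Omega$, giving a uniform upper bound $\alpha(\theta,x)\le M_0$ and hence $e^{-2s\alpha(\theta,x)}\ge e^{-2sM_0}>0$. For the second claim, set $m:=e^{2\lambda\|\eta^0\|_\infty}-e^{\lambda\|\eta^0\|_\infty}>0$ and $M:=e^{\lambda\|\eta^0\|_\infty}$, so that $\alpha\ge m/\gamma$ and $\xi\le M/\gamma$ on $\overline{\Omega}_{t_0,t_1}$. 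Then
\[
e^{-2s\alpha(t,x)}\xi^3(t,x)\le \frac{M^3}{\gamma(t)^3}\,e^{-2sm/\gamma(t)},
\]
and the right-hand side is a bounded function of $\gamma(t)\in(0,(t_1-t_0)^2/4]$, because $u\mapsto u^{-3}e^{-2sm/u}$ tends to $0$ as $u\downarrow 0$ (exponential decay dominates any polynomial singularity) and is continuous on the rest of the interval. The essential ingredient here is the strict positivity of $m$, which is a consequence of $\|\eta^0\|_\infty>0$ from Lemma~\ref{lm1}; once that is secured, no further obstacle remains.
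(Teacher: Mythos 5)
Your verification is correct in every item, including the two points that actually need care: the strict positivity of $m=e^{2\lambda\|\eta^0\|_\infty}-e^{\lambda\|\eta^0\|_\infty}$ (which follows from $\|\eta^0\|_\infty>0$, i.e.\ $\eta^0>0$ in the nonempty set $\Omega$) and the boundedness of $u\mapsto u^{-3}e^{-2sm/u}$ near $u=0$ in item (e). The paper itself gives no proof of this lemma, deferring to \cite{ACMO'19}, and your direct computation is exactly the standard argument one would find there; the only remark worth adding is that the constant $C$ in (b) obtained this way depends on $\lambda$ (through $e^{2\lambda\|\eta^0\|_\infty}$), which is harmless since $\lambda$ is ultimately fixed.
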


For $(z,z_\Gamma) \in \mathbb{E}_1$, we consider the following differential operators
\begin{align*}
L z &:=\partial_t z -\dv (A(x) \nabla z) - B(x)\cdot \nabla z &&\quad\text{ in }\Omega_T,\\
L_\Gamma (z_\Gamma, z) &:=\partial_t z_\Gamma -\dv_\Gamma (D(x)\nabla_\Gamma z_\Gamma) + \partial_\nu^A z - \langle b(x), \nabla_\Gamma z_{\Gamma} \rangle_{\Gamma}  &&\quad\text{ on }\Gamma_T.
\end{align*}
Following the decomposition in \cite{ACMO'19}, we set $\sigma:=A(\cdot)\nabla \eta^0 \cdot \nabla\eta^0$ and
\begin{align}
M_1 z &:=\partial_t z + 2s\lambda \xi A(x)\nabla \eta^0\cdot \nabla z + 2s\lambda^2 \xi \sigma z, \label{dec1}\\
N_1 z_\Gamma &:= \partial_t z_\Gamma - s\lambda \xi z_\Gamma \partial_\nu^A \eta^0. \label{dec2}
\end{align}
We will use the following lemma (see \cite[Lemma 2.4]{ACMO'19}), which is a key tool to prove the Lipschitz stability for our inverse problem.
\begin{lemma}[Carleman estimate]\label{lm2}
Let $T>0$, $\omega\Subset \Omega$ be nonempty and open subset. Then, there are three positive constants $\lambda_1,s_1 \geq 1$ and $C>0$ such that, for any $\lambda\geq \lambda_1$ and $s\geq s_1$, the following inequality holds
\begin{align}
& \bigintssss_{\Omega_{t_0, t_1}} \left(\frac{1}{s\xi} \left(|\partial_t z|^2 + |\dv(A\nabla z)|^2 \right)+ s\lambda^2 \xi |\nabla z|^2 + s^3\lambda^4\xi^3 |z|^2 \right)\mathrm{e}^{-2s\alpha} \,\d x\,\d t \quad  \nonumber\\
& + \int_{\Gamma_{t_0, t_1}} \left(\frac{1}{s\xi} (|\partial_t z_\Gamma|^2 +|\dv(D\nabla_\Gamma z_\Gamma)|^2)+s\lambda \xi |\nabla_\Gamma z_\Gamma|^2 + s^3\lambda^3\xi^3 |z_\Gamma|^2 + s\lambda \xi |\partial_\nu^A z|^2\right)\mathrm{e}^{-2s\alpha} \,\d S\,\d t \nonumber\\
& \;\;\leq  C \left(s^3\lambda^4\int_{\omega_{t_0, t_1}} \mathrm{e}^{-2s\alpha} \xi^3 |z|^2 \,\d x\,\d t + \int_{\Omega_{t_0, t_1}}\mathrm{e}^{-2s\alpha}|Lz|^2 \,\d x\,\d t + \int_{\Gamma_{t_0, t_1}}\mathrm{e}^{-2s\alpha}|L_\Gamma (z_\Gamma, z)|^2 \,\d S\,\d t\right) \label{car1}
\end{align}
for all $(z,z_\Gamma)\in \mathbb{E}_1$ and, for $(y,y_\Gamma):=\mathrm{e}^{-s\alpha} (z,z_\Gamma)$, we also have
\begin{align}
& \| M_1 y\|^2_{L^2\left(\Omega_{t_0, t_1}\right)} +\| N_1 y_\Gamma\|^2_{L^2\left(\Gamma_{t_0, t_1}\right)} +\, s^3 \lambda^4 \int_{\Omega_{t_0, t_1}} \mathrm{e}^{-2s\alpha} \xi^3 |z|^2 \,\d x\,\d t + s^3 \lambda^3 \int_{\Gamma_{t_0, t_1}} \mathrm{e}^{-2s\alpha} \xi^3 |z_\Gamma|^2 \,\d S\,\d t \nonumber\\
&\leq C \left(s^3 \lambda^4 \int_{\omega_{t_0, t_1}} \mathrm{e}^{-2s\alpha} \xi^3 |z|^2 \,\d x\,\d t + \int_{\Omega_{t_0, t_1}}\mathrm{e}^{-2s\alpha}|L z|^2 \,\d x\,\d t + \int_{\Gamma_{t_0, t_1}}\mathrm{e}^{-2s\alpha}|L_\Gamma (z_\Gamma, z)|^2 \,\d S\,\d t \right). \label{el5s1}
\end{align}
\end{lemma}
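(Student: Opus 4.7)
The proof follows the Fursikov--Imanuvilov scheme, adapted to the dynamic boundary condition as in the companion paper \cite{ACMO'19}. The starting point is to conjugate the interior and boundary heat-type operators with the weight $\mathrm{e}^{-s\alpha}$: given $(z,z_\Gamma)\in\mathbb{E}_1$, set $y:=\mathrm{e}^{-s\alpha}z$ and $y_\Gamma:=\mathrm{e}^{-s\alpha}y_\Gamma$, and expand $\mathrm{e}^{-s\alpha}L(\mathrm{e}^{s\alpha}y)$ and $\mathrm{e}^{-s\alpha}L_\Gamma(\mathrm{e}^{s\alpha}y_\Gamma,\mathrm{e}^{s\alpha}y)$. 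Each splits into an ``antisymmetric'' piece $M_1y$, $N_1y_\Gamma$ (exactly those defined in \eqref{dec1}--\eqref{dec2}), a ``symmetric'' piece $M_2y$, $N_2y_\Gamma$ containing the leading second-order part and the $s^2\lambda^2\xi^2\sigma y$-type term, and a remainder $Ry$, $R_\Gamma y_\Gamma$ of strictly lower order in $s$ and $\lambda$. Because $\nabla_\Gamma\alpha=\nabla_\Gamma\xi=0$ on $\Gamma$ (property (b) of $\alpha,\xi$), the boundary decomposition is genuinely parabolic on $\Gamma$ in the tangential variable, with only the conormal term $-s\lambda\xi z_\Gamma\partial_\nu^A\eta^0$ coupling interior and boundary.

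\textbf{Step 2 (energy identity).} Squaring and integrating the two decompositions on $\Omega_{t_0,t_1}$ and $\Gamma_{t_0,t_1}$ produces
\begin{equation*}
\|M_1y\|^2+\|M_2y\|^2+2\langle M_1y,M_2y\rangle+\|N_1y_\Gamma\|^2+\|N_2y_\Gamma\|^2+2\langle N_1y_\Gamma,N_2y_\Gamma\rangle \;\leq\; C\bigl(\|\mathrm{e}^{-s\alpha}Lz\|^2+\|\mathrm{e}^{-s\alpha}L_\Gamma(z_\Gamma,z)\|^2\bigr)+\text{remainders}.
\end{equation*}
The heart of the argument is to compute the two cross terms by integration by parts in $(t,x)$. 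The interior cross term $2\langle M_1y,M_2y\rangle_{L^2(\Omega_{t_0,t_1})}$ generates, after repeated use of Green's formula, four types of contributions: a well-signed distributed term $\sim s^3\lambda^4\xi^3|\nabla\eta^0|_A^4|y|^2$, a gradient term $\sim s\lambda^2\xi|\nabla y|_A^2$, unsigned lower-order terms (which will be absorbed for large $s,\lambda$), and, crucially, boundary integrals on $\Gamma_{t_0,t_1}$ involving $\partial_\nu^A y$ and the tangential data of $y_\Gamma$. These boundary integrals must be matched against the analogous ones coming from $2\langle N_1y_\Gamma,N_2y_\Gamma\rangle_{L^2(\Gamma_{t_0,t_1})}$, and this cancellation is the step that is specific to the dynamic boundary condition.

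\textbf{Step 3 (pseudo-convexity and sign).} Using Lemma \ref{lm1}: on $\overline{\Omega\setminus\omega}$ the quantity $|\nabla\eta^0|_A^2=\sigma$ is bounded below by a positive constant (uniform ellipticity of $A$), so the $s^3\lambda^4\xi^3|y|^2$ term is pointwise positive outside $\omega$; its contribution on $\omega_{t_0,t_1}$ is moved to the right-hand side as the observation term $s^3\lambda^4\int_{\omega_{t_0,t_1}}\mathrm{e}^{-2s\alpha}\xi^3|z|^2$. On $\Gamma$, the sign condition $\partial_\nu\eta^0\le -c<0$ together with the ellipticity of $A$ makes the boundary cross-term produce the desired positive dominants $s\lambda\xi|\partial_\nu^A z|^2\mathrm{e}^{-2s\alpha}$ and $s^3\lambda^3\xi^3|z_\Gamma|^2\mathrm{e}^{-2s\alpha}$, while the tangential cross term on $\Gamma$ yields $s\lambda\xi|\nabla_\Gamma y_\Gamma|_\Gamma^2$. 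Choose $\lambda\ge\lambda_1$ large enough to absorb the drifts $B\cdot\nabla y$ and $\langle b,\nabla_\Gamma y_\Gamma\rangle_\Gamma$ into the $s\lambda^2\xi|\nabla y|^2$ and $s\lambda\xi|\nabla_\Gamma y_\Gamma|_\Gamma^2$ terms, and $s\ge s_1$ large to absorb the time-remainders, using $|\partial_t\alpha|\le CT\xi^2$ from property (b). This proves \eqref{el5s1}.

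\textbf{Step 4 (recovering $z$).} To pass from \eqref{el5s1} to \eqref{car1}, invert the definitions of $M_1$ and $N_1$: since $\partial_t y=M_1y-2s\lambda\xi A\nabla\eta^0\cdot\nabla y-2s\lambda^2\xi\sigma y$, the identity $\partial_t z=\mathrm{e}^{s\alpha}(\partial_ty+s\partial_t\alpha\,y)$ together with $|\partial_t\alpha|\le CT\xi^2$ gives $\frac{1}{s\xi}|\partial_tz|^2\mathrm{e}^{-2s\alpha}$ controlled by $|M_1y|^2$ plus terms already dominated. The same computation with $M_2y$ (which contains $\mathrm{div}(A\nabla y)$ as its leading part) controls $\frac{1}{s\xi}|\mathrm{div}(A\nabla z)|^2\mathrm{e}^{-2s\alpha}$; the analogous boundary inversion gives the tangential-divergence and $\partial_t z_\Gamma$ terms in \eqref{car1}.

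\textbf{Main obstacle.} The delicate point is Step 2--3, namely the bookkeeping of the trace terms on $\Gamma_{t_0,t_1}$ produced by the interior integrations by parts and their combination with the boundary cross term. If one treated the boundary condition as a passive Dirichlet trace, these would have to be discarded or dominated by the interior estimate, and one could not obtain the tangential-gradient and $\partial_\nu^A z$ boundary contributions in \eqref{car1}. It is precisely the presence of $\partial_t y_\Gamma$ and the tangential Laplace--Beltrami term in $L_\Gamma$ that makes the surface integrals compatible, after an algebraic simplification exploiting $\nabla_\Gamma\alpha=0$ on $\Gamma$.
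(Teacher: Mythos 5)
First, a point of comparison: the paper does not prove Lemma \ref{lm2} at all. It imports both inequalities \eqref{car1} and \eqref{el5s1} verbatim from \cite[Lemma 2.4]{ACMO'19}, so there is no in-paper argument to measure your proposal against. Your outline does follow the same Fursikov--Imanuvilov strategy that the cited reference uses: conjugation by $\mathrm{e}^{-s\alpha}$, splitting into the first-order parts $M_1,N_1$ of \eqref{dec1}--\eqref{dec2} and second-order parts $M_2,N_2$, computation of the cross terms by integration by parts, positivity from Lemma \ref{lm1} off $\omega$ and from $\partial_\nu\eta^0\le -c<0$ on $\Gamma$, and recovery of the $\frac{1}{s\xi}(|\partial_t z|^2+|\dv(A\nabla z)|^2)$ terms by inverting the decomposition. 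The structural remarks you make are also the right ones: the degeneracy $\nabla_\Gamma\alpha=\nabla_\Gamma\xi=0$ on $\Gamma$ is what decouples the tangential dynamics and explains why the boundary terms in \eqref{car1} carry one power of $\lambda$ less than the interior ones, and the matching of the traces generated by the interior integration by parts against the boundary cross term is indeed the step specific to the dynamic boundary condition.

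That said, what you have written is a roadmap, not a proof. Every step that carries actual content is asserted rather than carried out: you never write down $M_2y$, $N_2y_\Gamma$ or the remainders explicitly; the ``four types of contributions'' of $2\langle M_1y,M_2y\rangle$ are named but not computed; the cancellation of the surface integrals against $2\langle N_1y_\Gamma,N_2y_\Gamma\rangle$ --- which you correctly identify as the delicate point --- is claimed to work without any verification that the coefficients actually match in sign and order; and the absorption of the unsigned lower-order and time-derivative remainders for large $s,\lambda$ is invoked without checking that their orders in $(s,\lambda,\xi)$ are strictly dominated by the positive terms retained on the left of \eqref{el5s1}. There is also a slip in the setup ($y_\Gamma:=\mathrm{e}^{-s\alpha}y_\Gamma$ should read $y_\Gamma:=\mathrm{e}^{-s\alpha}z_\Gamma$). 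As it stands the proposal would be acceptable as a statement of intent to reproduce the proof of \cite[Lemma 2.4]{ACMO'19}, but it cannot be accepted as an independent proof of \eqref{car1}--\eqref{el5s1}; if the goal is merely to justify the lemma within this paper, the honest and sufficient course is the one the authors take, namely to cite \cite{ACMO'19}.
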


\subsection{Proof of the stability estimate for potentials \label{subsec3.2}}
Now, we are ready to prove the main result on Lipschitz stability for our inverse potentials problem stated in Theorem \ref{thm1}, by using some ideas from \cite{CGR'06, YZ'01} in a modified form.

Let $Z=Y-\widetilde{Y}$, $a=p-\widetilde{p}$ and $\ell=q-\widetilde{q}$. Then,
\begin{empheq}[left = \empheqlbrace]{alignat=2}
\begin{aligned}
&\partial_t z =\dv(A(x) \nabla z) + B(x)\cdot \nabla z +p(x)z+a(x)\widetilde{y}, &\text{ in } \Omega_T ,\\
&\partial_t z_{\Gamma}=\dv_{\Gamma} (D(x)\nabla_\Gamma z_{\Gamma}) -\partial_{\nu}^A z + \langle b(x), \nabla_\Gamma z_{\Gamma} \rangle_{\Gamma}+q(x) z_{\Gamma}+\ell(x)\widetilde{y}_{\Gamma}, &\text{on } \Gamma_T,\\
&z_{\Gamma}(t,x)= z_{|\Gamma}(t,x), &\text{ on } \Gamma_T , \nonumber
\end{aligned}
\end{empheq}
and $V=\partial_t Z$ is the solution of
\begin{empheq}[left = \empheqlbrace]{alignat=2}
\begin{aligned}
&\partial_t v =\dv(A(x) \nabla v) + B(x)\cdot \nabla v +p(x) v+a(x)\partial_t\widetilde{y}, &\text{ in } \Omega_T ,\\
&\partial_t v_{\Gamma}=\dv_{\Gamma} (D(x)\nabla_\Gamma v_{\Gamma}) -\partial_{\nu}^A v + \langle b(x), \nabla_\Gamma v_{\Gamma} \rangle_{\Gamma}+q(x) z_{\Gamma}+\ell(x)\partial_t\widetilde{y}_{\Gamma}, &\text{on } \Gamma_T,\\
&v_{\Gamma}(t,x)= v_{|\Gamma}(t,x), &\text{ on } \Gamma_T . \label{el0s0}
\end{aligned}
\end{empheq}

We set
$$U=\mathrm{e}^{-s\alpha} V, \quad I:=\int_{\Omega_{t_0,\theta}} M_1 u\, u \,\d x\,\d t, \quad J:=\int_{\Gamma_{t_0,\theta}} N_1 u_\Gamma\, u_\Gamma \,\d S\,\d t,$$
where $M_1$ and $N_1$ are defined by \eqref{dec1} and \eqref{dec2} respectively. Since $U\in \mathbb{E}_1(t_0, T)$, $(M_1 u, N_1 u_\Gamma)\in L^2\left(t_0, T; \mathbb{L}^2\right)$, and then the integrals $I$ and $J$ are well defined.

For the sake of simplicity, we will first prove some lemmas that are needed in the proof of Theorem \ref{thm1}. Henceforth, $C=C\left(\Omega, \omega,t_0, t_1, r, R\right)$ denotes a positive constant which may vary from line to line.

\begin{lemma}\label{lems1}
There exist $\lambda_1, s_1 \geq 1$ and a positive constant $C$ so that for all $\lambda \geq \lambda_1$ and $s\geq s_1$, if $Y$ and $\widetilde{Y}$ are the solutions of $\left(\mathcal{E}_{p,q,Y_0}\right)$ and $\left(\mathcal{E}_{\widetilde{p},\widetilde{q},\widetilde{Y}_0}\right)$ respectively, we have
\begin{align}
|I|+|J| &\leq C \left[s^{3/2} \lambda ^{5/2} \int_{\omega_{t_0, t_1}} \mathrm{e}^{-2s\alpha} \xi^3 v^2 \,\d x\,\d t +s^{-3/2} \lambda^{-3/2} \int_{\Omega_{t_0, t_1}} \mathrm{e}^{-2s\alpha} a^2 (\partial_t \widetilde{y})^2 \,\d x\,\d t \right. \nonumber\\
& \quad\qquad \left. +\, s^{-3/2} \lambda^{-3/2} \int_{\Gamma_{t_0, t_1}} \mathrm{e}^{-2s\alpha} \ell^2 (\partial_t \widetilde{y}_\Gamma)^2 \,\d S\,\d t \right]. \label{eqlems1}
\end{align}
\end{lemma}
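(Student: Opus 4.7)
The plan is to apply the Carleman estimate \eqref{el5s1} to the system \eqref{el0s0} satisfied by $V=\partial_t Z=(v,v_\Gamma)$, and then control $I$ and $J$ by a single Cauchy--Schwarz step. From \eqref{el0s0} one reads off $Lv=pv+a\,\partial_t\widetilde{y}$ in $\Omega_T$ and $L_\Gamma(v_\Gamma,v)=qv_\Gamma+\ell\,\partial_t\widetilde{y}_\Gamma$ on $\Gamma_T$. Inserting these into the right-hand side of \eqref{el5s1}, using $(A+B)^2\le 2A^2+2B^2$ and $\|p\|_\infty,\|q\|_\infty\le R$, produces two lower-order contributions proportional to $R^2\!\int \mathrm{e}^{-2s\alpha}v^2$ and $R^2\!\int \mathrm{e}^{-2s\alpha}v_\Gamma^2$. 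Since $\xi\ge 4/(t_1-t_0)^2$, these are dominated by $C\!\int \mathrm{e}^{-2s\alpha}\xi^3v^2$ and $C\!\int \mathrm{e}^{-2s\alpha}\xi^3 v_\Gamma^2$ respectively, and therefore absorbed into the $s^3\lambda^4\!\int \mathrm{e}^{-2s\alpha}\xi^3 v^2$ and $s^3\lambda^3\!\int \mathrm{e}^{-2s\alpha}\xi^3 v_\Gamma^2$ terms on the left of \eqref{el5s1}, provided $s\ge s_1$ and $\lambda\ge\lambda_1$ are taken large enough in terms of $R$.

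Introducing the shorthand
\[
\mathcal{R}:=s^3\lambda^4\!\int_{\omega_{t_0,t_1}}\!\mathrm{e}^{-2s\alpha}\xi^3 v^2\,\mathrm{d}x\,\mathrm{d}t+\int_{\Omega_{t_0,t_1}}\!\mathrm{e}^{-2s\alpha}a^2(\partial_t\widetilde{y})^2\,\mathrm{d}x\,\mathrm{d}t+\int_{\Gamma_{t_0,t_1}}\!\mathrm{e}^{-2s\alpha}\ell^2(\partial_t\widetilde{y}_\Gamma)^2\,\mathrm{d}S\,\mathrm{d}t,
\]
and recalling $u=\mathrm{e}^{-s\alpha}v$, $u_\Gamma=\mathrm{e}^{-s\alpha}v_\Gamma$, the absorbed version of \eqref{el5s1} reads
\[
\|M_1u\|^2_{L^2(\Omega_{t_0,t_1})}+\|N_1u_\Gamma\|^2_{L^2(\Gamma_{t_0,t_1})}+s^3\lambda^4\!\int_{\Omega_{t_0,t_1}}\!\xi^3 u^2\,\mathrm{d}x\,\mathrm{d}t+s^3\lambda^3\!\int_{\Gamma_{t_0,t_1}}\!\xi^3 u_\Gamma^2\,\mathrm{d}S\,\mathrm{d}t\le C\mathcal{R}.
\]
Since $(t_0,\theta)\subset(t_0,t_1)$, Cauchy--Schwarz in space-time gives
\[
|I|\le \|M_1u\|_{L^2(\Omega_{t_0,t_1})}\|u\|_{L^2(\Omega_{t_0,t_1})},\qquad |J|\le \|N_1u_\Gamma\|_{L^2(\Gamma_{t_0,t_1})}\|u_\Gamma\|_{L^2(\Gamma_{t_0,t_1})}.
\]

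Using once more the uniform lower bound on $\xi^3$, the Carleman estimate furnishes $\|u\|^2_{L^2}\le C(s^3\lambda^4)^{-1}\mathcal{R}$, $\|u_\Gamma\|^2_{L^2}\le C(s^3\lambda^3)^{-1}\mathcal{R}$, as well as $\|M_1u\|^2_{L^2},\|N_1u_\Gamma\|^2_{L^2}\le C\mathcal{R}$. Multiplying the relevant pairs yields $|I|\le Cs^{-3/2}\lambda^{-2}\mathcal{R}$ and $|J|\le Cs^{-3/2}\lambda^{-3/2}\mathcal{R}$; adding the two and using $\lambda^{-2}\le\lambda^{-3/2}$ for $\lambda\ge 1$ gives $|I|+|J|\le Cs^{-3/2}\lambda^{-3/2}\mathcal{R}$, which is exactly the bound \eqref{eqlems1} after redistributing the factors $s^{-3/2}\lambda^{-3/2}$ over the three terms of $\mathcal{R}$ (the observation term gains a factor $s^3\lambda^4$, producing the claimed $s^{3/2}\lambda^{5/2}$).

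The only nontrivial step is the absorption of $R^2v^2$ and $R^2 v_\Gamma^2$ into the Carleman LHS; once that is handled by choosing $s,\lambda$ large, the rest reduces to two applications of Cauchy--Schwarz paired with the trivial lower bound on $\xi^3$. No regularity obstacle arises: Proposition \ref{prop2} provides $V\in\mathbb{E}_1(t_0,T)$, and the weight $\mathrm{e}^{-s\alpha}$ vanishes as $t\to t_0^+$, which legitimates the use of \eqref{el5s1} on the sub-interval $(t_0,t_1)$.
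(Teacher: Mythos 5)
Your proposal is correct and follows essentially the same route as the paper: both hinge on applying the Carleman estimate \eqref{el5s1} to the system for $V=\partial_t Z$, absorbing the $p^2v^2$ and $q^2v_\Gamma^2$ terms via $1\leq C_1\xi^3$ for $s,\lambda$ large, and controlling $I$ and $J$ by Cauchy--Schwarz against $\|M_1u\|$, $\|N_1u_\Gamma\|$ and the weighted $L^2$-norms of $u$, $u_\Gamma$. The only (cosmetic) difference is that the paper performs Cauchy--Schwarz and Young first and then invokes the Carleman estimate, whereas you invoke it first and multiply the resulting square-root bounds; the constants and the final inequality \eqref{eqlems1} come out the same.
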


\begin{proof}
Using Cauchy-Schwarz and Young inequalities, we have
\begin{align}
|I| &\leq s^{-3/2} \lambda^{-2}\left(\int_{\Omega_{t_0, \theta}} (M_1 u)^2 \,\d x\,\d t \right )^{1/2} \left( s^{3} \lambda ^{4} \int_{\Omega_{t_0,\theta}} \mathrm{e}^{-2s\alpha} v^2 \,\d x\,\d t \right)^{1/2} \nonumber\\
&\leq \frac{1}{2}s^{-3/2} \lambda^{-3/2} \left(\| M_1 u\|^2_{L^2\left(\Omega_{t_0, t_1}\right)}+ C_1 s^3 \lambda^4 \int_{\Omega_{t_0, t_1}} \mathrm{e}^{-2s\alpha} \xi^3 v^2 \,\d x\,\d t \right)\label{el2s1}
\end{align}
for $\lambda \ge 1$, using $\xi^3 \geq \dfrac{1}{C_1} >0$ where $C_1 =\frac{(t_1-t_0)^6}{64}$. Similarly,
\begin{align}
|J| &\leq s^{-3/2} \lambda^{-3/2}\left(\int_{\Gamma_{t_0, \theta}} (N_1 u_\Gamma)^2 \,\d S\,\d t \right )^{1/2} \left( s^{3} \lambda^{3} \int_{\Gamma_{t_0,\theta}} \mathrm{e}^{-2s\alpha} v_\Gamma^2 \,\d S\,\d t \right)^{1/2} \nonumber\\
&\leq \frac{1}{2}s^{-3/2} \lambda^{-3/2} \left(\| N_1 u_\Gamma\|^2_{L^2\left(\Gamma_{t_0, t_1}\right)}+ C_1 s^3 \lambda^3 \int_{\Gamma_{t_0, t_1}} \mathrm{e}^{-2s\alpha} \xi^3 v_\Gamma^2 \,\d S\,\d t \right).\label{el4s1}
\end{align}
Adding up \eqref{el2s1} and \eqref{el4s1}, we obtain
\begin{align}
|I|+|J| &\leq \frac{s^{-3/2}\lambda^{-3/2}}{2} \left( \| M_1 u\|^2_{L^2\left(\Omega_{t_0, t_1}\right)} +\| N_1 u_\Gamma\|^2_{L^2\left(\Gamma_{t_0, t_1}\right)} +\, C_1 s^3 \lambda^4 \int_{\Omega_{t_0, t_1}} \mathrm{e}^{-2s\alpha} \xi^3 v^2 \,\d x\,\d t \right.\nonumber\\
& \hspace{3cm} \left. +\ C_1 s^3 \lambda^3 \int_{\Gamma_{t_0, t_1}} \mathrm{e}^{-2s\alpha} \xi^3 v_\Gamma^2 \,\d S\,\d t \right). \label{ij}
\end{align}
By applying Carleman estimate \eqref{el5s1} to \eqref{el0s0}, we have
\begin{align}
& \| M_1 u\|^2_{L^2\left(\Omega_{t_0, t_1}\right)} +\| N_1 u_\Gamma\|^2_{L^2\left(\Gamma_{t_0, t_1}\right)} +\, s^3 \lambda^4 \int_{\Omega_{t_0, t_1}} \mathrm{e}^{-2s\alpha} \xi^3 v^2 \,\d x\,\d t +\, s^3 \lambda^3 \int_{\Gamma_{t_0, t_1}} \mathrm{e}^{-2s\alpha} \xi^3 v_\Gamma^2 \,\d S\,\d t \nonumber\\
&\leq C \left[s^3 \lambda^4 \int_{\omega_{t_0, t_1}} \mathrm{e}^{-2s\alpha} \xi^3 v^2 \,\d x\,\d t + \int_{\Omega_{t_0, t_1}} \mathrm{e}^{-2s\alpha} [p^2 v^2 + a^2 (\partial_t \widetilde{y})^2] \,\d x\,\d t \right. \nonumber\\
& \qquad\qquad \left. +\, \int_{\Gamma_{t_0, t_1}} \mathrm{e}^{-2s\alpha} [q^2 v_\Gamma^2 + \ell^2 (\partial_t \widetilde{y}_\Gamma)^2] \,\d S\,\d t \right]. \label{el5s11}
\end{align}
Since $1\leq C_1\xi^3$ and the potentials $p$ and $q$ are bounded, we have
\begin{align*}
& \int_{\Omega_{t_0, t_1}}  \mathrm{e}^{-2s\alpha} p^2 v^2 \,\d x\,\d t +\int_{\Gamma_{t_0, t_1}} \mathrm{e}^{-2s\alpha} q^2 v_\Gamma^2  \,\d S\,\d t \leq C \left[ \int_{\Omega_{t_0, t_1}} \mathrm{e}^{-2s\alpha} \xi^3 v^2 \,\d x\,\d t +\int_{\Gamma_{t_0, t_1}} \mathrm{e}^{-2s\alpha} \xi^3 v_\Gamma^2  \,\d S\,\d t \right],
\end{align*}
and this later term can be absorbed on the left hand side of \eqref{el5s11} for $s$ and $\lambda$ large enough. Combining this with \eqref{ij}, we obtain \eqref{eqlems1}.
\end{proof}

\begin{lemma}\label{lems2}
There exist $\lambda_1, s_1 \geq 1$ and $C>0$ such that, for all $\lambda \geq \lambda_1$ and $s\geq s_1$, we have
\begin{align}
&\int_\Omega \mathrm{e}^{-2s\alpha(\theta, x)} a^2(x)[\widetilde{y}(\theta, x)]^2 \,\d x -C s^{-3/2} \lambda^{-3/2} \int_{\Omega_{t_0, t_1}} \mathrm{e}^{-2s\alpha} a^2(x)(\partial_t \widetilde{y})^2 \,\d x\,\d t \nonumber\\
& + \int_\Gamma \mathrm{e}^{-2s\alpha(\theta, x)} \ell^2(x)[\widetilde{y}_\Gamma(\theta, x)]^2 \,\d x -C s^{-3/2} \lambda^{-3/2} \int_{\Gamma_{t_0, t_1}} \mathrm{e}^{-2s\alpha} \ell^2(x) (\partial_t \widetilde{y}_\Gamma)^2 \,\d S\,\d t \nonumber \\
& \qquad \leq C \left[s^{3/2} \lambda^{5/2} \int_{\omega_{t_0, t_1}}\mathrm{e}^{-2s\alpha}\xi^3 v^2 \,\d x\,\d t + \|Z(\theta, \cdot)\|_{\mathbb{H}^2}^2\right] . \label{eqlems2}
\end{align}
\end{lemma}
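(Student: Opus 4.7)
The strategy is the standard Bukhgeim--Klibanov scheme for inverse coefficient problems, applied to the pair $(v,v_\Gamma)=\partial_t Z$: I first relate $I+J$ to weighted terminal energies at $t=\theta$ via integration by parts, then extract the potential contributions $a^2\widetilde y(\theta)^2$ and $\ell^2\widetilde y_\Gamma(\theta)^2$ from the PDEs at $t=\theta$, and finally invoke Lemma~\ref{lems1} to control $|I+J|$. Since $\alpha(t,x)\to+\infty$ as $t\to t_0^+$, we have $u(t_0,\cdot)=0$ and $u_\Gamma(t_0,\cdot)=0$, so integrating the time derivatives in $I$ and $J$ over $(t_0,\theta)$ produces only the terminal contributions $\tfrac12\int_\Omega u^2(\theta,\cdot)\,\d x$ and $\tfrac12\int_\Gamma u_\Gamma^2(\theta,\cdot)\,\d S$. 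The transport term in $I$ is rewritten as $s\lambda\,\xi\, A\nabla\eta^0\cdot\nabla(u^2)$ and integrated by parts in $x$; the resulting boundary contribution $s\lambda\!\int_{\Gamma_{t_0,\theta}}\xi(\partial_\nu^A\eta^0)u_\Gamma^2\,\d S\,\d t$ cancels exactly with the analogous boundary-potential term from $N_1u_\Gamma\cdot u_\Gamma$ in $J$. The remaining interior pieces combine, via $\xi\le C\xi^3$, into a single remainder $R$ with $|R|\le C\,s\lambda^2\!\int_{\Omega_{t_0,t_1}}\!\mathrm{e}^{-2s\alpha}\xi^3 v^2\,\d x\,\d t$, and one obtains the identity
\[
I+J=\tfrac12\!\int_\Omega u^2(\theta,\cdot)\,\d x+\tfrac12\!\int_\Gamma u_\Gamma^2(\theta,\cdot)\,\d S+R.
\]

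\textbf{Extracting the potentials.} Subtracting the two systems and evaluating at $t=\theta$ yields the pointwise decomposition $v(\theta,\cdot)=a\,\widetilde y(\theta,\cdot)+W$ with $W:=\dv(A\nabla z(\theta))+B\cdot\nabla z(\theta)+p\,z(\theta)$, and an analogous tangential identity $v_\Gamma(\theta,\cdot)=\ell\,\widetilde y_\Gamma(\theta,\cdot)+W_\Gamma$. The boundedness of the coefficients and the trace inequality give $\|W\|_{L^2(\Omega)}+\|W_\Gamma\|_{L^2(\Gamma)}\le C\|Z(\theta,\cdot)\|_{\mathbb{H}^2}$. Using $(X+Y)^2\ge\tfrac12 X^2-Y^2$ together with $\mathrm{e}^{-2s\alpha(\theta,\cdot)}\le 1$ on $\overline\Omega$ (since $\alpha\ge0$), we deduce
\[
\int_\Omega u^2(\theta,\cdot)\,\d x\ge\tfrac12\!\int_\Omega\!\mathrm{e}^{-2s\alpha(\theta,\cdot)}a^2\widetilde y(\theta,\cdot)^2\,\d x-C\|Z(\theta,\cdot)\|_{\mathbb{H}^2}^2,
\]
and the analogous inequality on $\Gamma$. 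Substituting these lower bounds into the identity above and applying Lemma~\ref{lems1} to estimate $|I+J|$ produces precisely the three kinds of terms on the right-hand side of \eqref{eqlems2}; the $a^2(\partial_t\widetilde y)^2$ and $\ell^2(\partial_t\widetilde y_\Gamma)^2$ integrals then reappear with the small prefactor $Cs^{-3/2}\lambda^{-3/2}$ and are transferred to the left-hand side, exactly as displayed in \eqref{eqlems2}.

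\textbf{Main obstacle.} The delicate point is absorbing the remainder $R$: it is an $\Omega_{t_0,t_1}$-integral, whereas \eqref{eqlems2} is localized to $\omega_{t_0,t_1}$. Applying the Carleman estimate \eqref{el5s1} once more to $(v,v_\Gamma)$ allows one to bound $s\lambda^2\!\int_{\Omega_{t_0,t_1}}\!\mathrm{e}^{-2s\alpha}\xi^3 v^2$ by $Cs^{-2}\lambda^{-2}$ times the full Carleman right-hand side, namely $s^3\lambda^4\!\int_{\omega_{t_0,t_1}}\!\mathrm{e}^{-2s\alpha}\xi^3 v^2$ plus the source-term contributions in $a$ and $\ell$; for $s,\lambda$ large enough the factor $s^{-2}\lambda^{-2}$ renders these pieces negligible compared with those already on the RHS of \eqref{eqlems2}, and $R$ is absorbed, closing the estimate.
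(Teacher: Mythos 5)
Your argument is correct and follows essentially the same route as the paper: extract $a\widetilde y(\theta)$ and $\ell\widetilde y_\Gamma(\theta)$ from the equations at $t=\theta$, compute $I$ and $J$ by integration by parts using $u(t_0,\cdot)=u_\Gamma(t_0,\cdot)=0$, control the interior remainder by a second application of the Carleman estimate with the gain $s^{-2}\lambda^{-2}$, and conclude via Lemma~\ref{lems1}. The only (harmless) variation is that you cancel the two boundary terms $\pm s\lambda\int_{\Gamma_{t_0,\theta}}\xi(\partial_\nu^A\eta^0)u_\Gamma^2\,\d S\,\d t$ exactly, whereas the paper bounds each by $Cs\lambda\int_{\Gamma_{t_0,\theta}}\mathrm{e}^{-2s\alpha}\xi v_\Gamma^2\,\d S\,\d t$ and absorbs it together with the interior remainder.
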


\begin{proof}
We have $$v(\theta, x)=\partial_t z(\theta,x)=\dv(A(x) \nabla z(\theta,x))+ B(x)\cdot \nabla z(\theta,x)+p(x)z(\theta,x)+a(x)\widetilde{y}(\theta,x).$$
Since $A\in C^1(\overline{\Omega},\mathbb{R}^{N\times N})$ and $B, \; p$ are bounded, we obtain
\begin{align}
\int_\Omega \mathrm{e}^{-2s\alpha(\theta, x)} a^2 [\widetilde{y}(\theta, x)]^2 \,\d x &\leq C \int_\Omega \mathrm{e}^{-2s\alpha(\theta, x)} v^2(\theta, x) \,\d x + C \|z(\theta,\cdot)\|_{H^2(\Omega)}^2 .\label{e1s2}
\end{align}
Similarly,
\begin{equation*}
v_\Gamma(\theta, x)=\dv_{\Gamma} (D\nabla_\Gamma z_{\Gamma}(\theta,x))-\partial_{\nu}^A z(\theta,x)+ \langle b(x), \nabla_\Gamma z_{\Gamma}(\theta,x)\rangle_{\Gamma}+q(x)z_\Gamma(\theta,x)+\ell(x)\widetilde{y}_\Gamma(\theta,x).
\end{equation*}
By trace theorem (see, e.g., \cite{LM'72}), we have $\|\partial_\nu^A z(\theta,\cdot)\|_{L^2(\Gamma)}\leq C\|z(\theta, \cdot)\|_{H^2(\Omega)}$. Since $D\in C^1\left(\Gamma,\mathbb{R}^{N\times N}\right)$, the boundedness of $b$ and $q$ implies that
\begin{align}
\int_\Gamma \mathrm{e}^{-2s\alpha(\theta, x)} \ell^2 [\widetilde{y}_\Gamma(\theta, x)]^2 \,\d S &\leq C \int_\Gamma \mathrm{e}^{-2s\alpha(\theta, x)} v^2_\Gamma(\theta, x) \,\d S + C\|z_\Gamma(\theta,\cdot)\|_{H^2(\Gamma)}^2 +C \|z(\theta,\cdot)\|_{H^2(\Omega)}^2 .\label{e3s2}
\end{align}
Combining \eqref{e1s2} and \eqref{e3s2}, we obtain
\begin{align}
&\int_\Omega \mathrm{e}^{-2s\alpha(\theta, x)} a^2 [\widetilde{y}(\theta, x)]^2 \,\d x +\int_\Gamma \mathrm{e}^{-2s\alpha(\theta, x)} \ell^2 [\widetilde{y}_\Gamma(\theta, x)]^2 \,\d S \nonumber\\
& \quad \leq C \left[\int_\Omega \mathrm{e}^{-2s\alpha(\theta, x)} v^2(\theta, x) \,\d x +\int_\Gamma \mathrm{e}^{-2s\alpha(\theta, x)} v^2_\Gamma(\theta, x) \,\d S +\|Z(\theta,\cdot)\|_{\mathbb{H}^2}^2\right]. \label{e4s2}
\end{align}
Next, we estimate the terms $\displaystyle \int_\Omega \mathrm{e}^{-2s\alpha(\theta, x)} v^2(\theta, x) \,\d x$ and $\displaystyle\int_\Gamma \mathrm{e}^{-2s\alpha(\theta, x)} v^2_\Gamma(\theta, x) \,\d S $.\\
We have
$$I=\frac{1}{2} \int_{\Omega_{t_0, \theta}} \partial_t (u^2) \,\d x\,\d t +2 s\lambda \int_{\Omega_{t_0, \theta}} \xi A(x)\nabla \eta^0 \cdot \nabla u \, u \,\d x\,\d t +2s\lambda^2 \int_{\Omega_{t_0, \theta}} \xi \sigma u^2 \,\d x\,\d t.$$
Integration by parts over $\Omega$ yields
\begin{align*}
I &=\frac{1}{2} \int_{\Omega_{t_0, \theta}} \partial_t (u^2) \,\d x\,\d t - s\lambda^2 \int_{\Omega_{t_0, \theta}} \xi \sigma u^2 \,\d x\,\d t - s\lambda \int_{\Omega_{t_0, \theta}} \xi \dv(A(x)\nabla \eta^0) u^2 \,\d x\,\d t \\
&\qquad + s\lambda \int_{\Gamma_{t_0, \theta}} \xi \partial_\nu^A \eta^0 u_\Gamma^2 \,\d S\,\d t +2s\lambda^2 \int_{\Omega_{t_0, \theta}} \xi \sigma u^2 \,\d x\,\d t\\
&= \frac{1}{2} \int_{\Omega_{t_0, \theta}} \partial_t (u^2) \,\d x\,\d t + s\lambda^2 \int_{\Omega_{t_0, \theta}} \xi \sigma u^2 \,\d x\,\d t - s\lambda \int_{\Omega_{t_0, \theta}} \xi \dv(A(x)\nabla \eta^0) u^2 \,\d x\,\d t \\
&\qquad + s\lambda \int_{\Gamma_{t_0, \theta}} \xi \partial_\nu^A \eta^0 u_\Gamma^2 \,\d S\,\d t,
\end{align*}
where we employed $\nabla \xi = \lambda \xi \nabla \eta^0$. Since $u(t_0, x)=\lim\limits_{t \to t_0} \mathrm{e}^{-s\alpha(t,x)} v(t,x)=0$, we obtain
\begin{align*}
\frac{1}{2} \int_\Omega u^2(\theta,x) \,\d x &= I - s\lambda^2 \int_{\Omega_{t_0, \theta}} \xi \sigma u^2 \,\d x\,\d t + s\lambda \int_{\Omega_{t_0, \theta}} \xi \dv(A(x)\nabla \eta^0) u^2 \,\d x\,\d t - s\lambda \int_{\Gamma_{t_0, \theta}} \xi \partial_\nu^A \eta^0 u_\Gamma^2 \,\d S\,\d t.
\end{align*}
Using $u=\mathrm{e}^{-s\alpha} v$ and $\eta^0 \in C^2(\overline{\Omega})$, we derive the following inequality
\begin{align}
\int_\Omega \mathrm{e}^{-2s\alpha(\theta, x)} v^2(\theta, x) \,\d x &\leq 2 |I| + C s\lambda^2 \int_{\Omega_{t_0, \theta}} \mathrm{e}^{-2s\alpha} \xi v^2 \,\d x\,\d t +C s\lambda \int_{\Gamma_{t_0, \theta}} \mathrm{e}^{-2s\alpha} \xi v^2_\Gamma \,\d S\,\d t. \label{e2s2}
\end{align}
Analogously, we have
$$J=\frac{1}{2} \int_{\Gamma_{t_0, \theta}} \partial_t (u_\Gamma^2) \,\d S\,\d t - s\lambda \int_{\Gamma_{t_0, \theta}} \xi (\partial_\nu^A \eta^0) u_\Gamma^2 \,\d S\,\d t,$$
and $u_\Gamma(t_0, x)=0$. Then,
$$\frac{1}{2} \int_\Gamma u^2_\Gamma(\theta,x) \,\d S = J + s\lambda \int_{\Gamma_{t_0, \theta}} \xi (\partial_\nu^A \eta^0) u_\Gamma^2 \,\d S\,\d t,$$
and
\begin{align}
\int_\Gamma \mathrm{e}^{-2s\alpha(\theta, x)} v^2_\Gamma(\theta, x) \,\d S \leq 2 |J| + C s\lambda \int_{\Gamma_{t_0, \theta}} \mathrm{e}^{-2s\alpha} \xi v^2_\Gamma \,\d S\,\d t. \label{ee2s2}
\end{align}
Adding up \eqref{e2s2} and \eqref{ee2s2}, we obtain
\begin{align}
& \int_\Omega \mathrm{e}^{-2s\alpha(\theta, x)} v^2(\theta, x) \,\d x +\int_\Gamma \mathrm{e}^{-2s\alpha(\theta, x)} v^2_\Gamma(\theta, x) \,\d S \nonumber \\
& \qquad \leq 2(|I|+|J|) + C s\lambda^2 \int_{\Omega_{t_0, \theta}} \mathrm{e}^{-2s\alpha} \xi v^2 \,\d x\,\d t + C s\lambda \int_{\Gamma_{t_0, \theta}} \mathrm{e}^{-2s\alpha} \xi v^2_\Gamma \,\d S\,\d t \label{e5s2}.
\end{align}
Using Carleman estimate \eqref{car1} and $\xi \leq C_2 \xi^3$ where $C_2=\frac{(t_1-t_0)^4}{16}$, we derive
\begin{align}
& s\lambda^2 \int_{\Omega_{t_0, \theta}} \mathrm{e}^{-2s\alpha} \xi v^2 \,\d x\,\d t + s\lambda \int_{\Gamma_{t_0, \theta}} \mathrm{e}^{-2s\alpha} \xi v^2_\Gamma \,\d S\,\d t \nonumber \\
& \quad \leq C s^{-2} \lambda^{-2} \left(s^3\lambda^4 \int_{\Omega_{t_0, t_1}} \mathrm{e}^{-2s\alpha} \xi^3 v^2 \,\d x\,\d t + s^3\lambda^3 \int_{\Gamma_{t_0, t_1}} \mathrm{e}^{-2s\alpha} \xi^3  v^2_\Gamma \,\d S\,\d t \right) \nonumber \\
& \quad \leq C \left[ s\lambda^2 \int_{\omega_{t_0, t_1}} \mathrm{e}^{-2s\alpha} \xi^3 v^2 \,\d x\,\d t + s^{-2} \lambda^{-2} \int_{\Omega_{t_0, t_1}} \mathrm{e}^{-2s\alpha} [p^2 v^2 + a^2 (\partial_t \widetilde{y})^2] \,\d x\,\d t \right. \nonumber \\
& \qquad\qquad + \left.  s^{-2} \lambda^{-2} \int_{\Gamma_{t_0, t_1}} \mathrm{e}^{-2s\alpha} [q^2 v^2_\Gamma + \ell^2 (\partial_t \widetilde{y}_\Gamma)^2] \,\d S\,\d t \right]. \label{e6s2}
\end{align}
Since $\|p\|_\infty , \|q\|_\infty \leq R$ and $1 \leq C_3 \xi$ with $C_3=\frac{(t_1-t_0)^2}{4}$, we have
\begin{align*}
& s^{-2} \lambda^{-2} \int_{\Omega_{t_0, t_1}} \mathrm{e}^{-2s\alpha} p^2 v^2 \,\d x\,\d t +   s^{-2} \lambda^{-2} \int_{\Gamma_{t_0, t_1}} \mathrm{e}^{-2s\alpha} q^2 v^2_\Gamma \,\d S\,\d t \\
& \quad \leq C s^{-2} \lambda^{-2} \int_{\Omega_{t_0, t_1}} \mathrm{e}^{-2s\alpha} \xi v^2 \,\d x\,\d t + C s^{-2} \lambda^{-2} \int_{\Gamma_{t_0, t_1}} \mathrm{e}^{-2s\alpha} \xi v^2_\Gamma \,\d S\,\d t,
\end{align*}
and this later term can be absorbed by the left-hand side of \eqref{e6s2}. Thus, we arrive at
\begin{align}
& s\lambda^2 \int_{\Omega_{t_0, \theta}} \mathrm{e}^{-2s\alpha} \xi v^2 \,\d x\,\d t + s\lambda \int_{\Gamma_{t_0, \theta}} \mathrm{e}^{-2s\alpha} \xi v^2_\Gamma \,\d S\,\d t \leq C \left[ s\lambda^2 \int_{\omega_{t_0, t_1}} \mathrm{e}^{-2s\alpha} \xi^3 v^2 \,\d x\,\d t \right. \nonumber\\
& \quad \left. +\, s^{-2} \lambda^{-2} \int_{\Omega_{t_0, t_1}} \mathrm{e}^{-2s\alpha} a^2 (\partial_t \widetilde{y})^2 \,\d x\,\d t + s^{-2} \lambda^{-2} \int_{\Gamma_{t_0, t_1}} \mathrm{e}^{-2s\alpha} \ell^2 (\partial_t \widetilde{y}_\Gamma)^2 \,\d S\,\d t \right]. \label{e7s2}
\end{align}
Combining this estimate with \eqref{e5s2} and Lemma \ref{lems1}, we deduce
\begin{align*}
& \int_\Omega \mathrm{e}^{-2s\alpha(\theta, x)} v^2(\theta, x) \,\d x +  \int_\Gamma \mathrm{e}^{-2s\alpha(\theta, x)} v^2_\Gamma(\theta, x) \,\d S  \leq C \left[s^{3/2} \lambda^{5/2} \int_{\omega_{t_0, t_1}}\mathrm{e}^{-2s\alpha}\xi^3 v^2 \,\d x\,\d t \right. \nonumber\\
& \hspace{1cm} \left. +\, s^{-3/2} \lambda^{-3/2} \int_{\Omega_{t_0, t_1}}\mathrm{e}^{-2s\alpha} a^2 (\partial_t \widetilde{y})^2 \,\d x\,\d t + s^{-3/2} \lambda^{-3/2} \int_{\Gamma_{t_0, t_1}}\mathrm{e}^{-2s\alpha} \ell^2 (\partial_t \widetilde{y}_\Gamma)^2 \,\d S\,\d t \right]. 
\end{align*}
This with \eqref{e4s2} yield the result.
\end{proof}

\begin{proof}[Proof of Theorem \ref{thm1}]
By inequality \eqref{bd1} and the fact that $\alpha$ attains its minimum at $\theta$, we have
\begin{align*}
& -C s^{-3/2} \lambda^{-3/2} \int_{\Omega} \mathrm{e}^{-2s\alpha(\theta,x)} a^2(x) \,\d x -C s^{-3/2} \lambda^{-3/2} \int_{\Gamma}\mathrm{e}^{-2s\alpha(\theta,x)} \ell^2(x) \,\d S\\
& \qquad \leq -C s^{-3/2} \lambda^{-3/2} \int_{\Omega_{t_0, t_1}}\mathrm{e}^{-2s\alpha} a^2 (\partial_t \widetilde{y})^2 \,\d x\,\d t -C s^{-3/2} \lambda^{-3/2} \int_{\Gamma_{t_0, t_1}}\mathrm{e}^{-2s\alpha} \ell^2 (\partial_t \widetilde{y}_\Gamma)^2 \,\d S\,\d t.
\end{align*}
Then, Lemma \ref{lems2} implies that
\begin{align}
&\int_\Omega \mathrm{e}^{-2s\alpha(\theta, x)} a^2([\widetilde{y}(\theta, x)]^2 -C s^{-3/2} \lambda^{-3/2}) \,\d x + \int_\Gamma \mathrm{e}^{-2s\alpha(\theta, x)} \ell^2([\widetilde{y}_\Gamma(\theta, x)]^2 -C s^{-3/2} \lambda^{-3/2}) \,\d S \nonumber \\
& \qquad \leq C \left[s^{3/2} \lambda^{5/2} \int_{\omega_{t_0, t_1}}\mathrm{e}^{-2s\alpha}\xi^3 v^2 \,\d x\,\d t + \|Z(\theta, \cdot)\|_{\mathbb{H}^2}^2\right]. \label{e1s3}
\end{align}
Using Lemma \ref{lempos1}, we obtain
$$\widetilde{y}^2(\theta,x) \geq \mathrm{e}^{-2RT} r^2 >0 \quad\text{ and } \quad\widetilde{y}_\Gamma^2(\theta,x) \geq \mathrm{e}^{-2RT} r^2 >0.$$
It follows that
\begin{align*}
\widetilde{y}^2(\theta,x) -C s^{-3/2} \lambda^{-3/2} &\geq \mathrm{e}^{-2RT} r^2 -C s^{-3/2} \lambda^{-3/2}\ge r_1 >0,\\
\widetilde{y}_\Gamma^2(\theta,x) -C s^{-3/2} \lambda^{-3/2} &\geq r_1 >0
\end{align*}
for some $r_1$ independent on $s$ and $\lambda$ sufficiently large. Hence, the inequality \eqref{e1s3} becomes
\begin{align}
&\int_\Omega \mathrm{e}^{-2s\alpha(\theta, x)} a^2(x) \,\d x + \int_\Gamma \mathrm{e}^{-2s\alpha(\theta, x)} \ell^2(x) \,\d S \leq \frac{C}{r_1} \left(s^{3/2} \lambda^{5/2} \int_{\omega_{t_0, t_1}}\mathrm{e}^{-2s\alpha}\xi^3 v^2 \,\d x\,\d t + \|Z(\theta, \cdot)\|_{\mathbb{H}^2}^2\right). \label{e2s3}
\end{align}
Since $\inf\limits_{x\in \Omega} \mathrm{e}^{-2s\alpha(\theta, x)} >0$ and $\sup\limits_{(t,x)\in \overline{\Omega}_{t_0, t_1}} \mathrm{e}^{-2s\alpha(t,x)} \xi^3(t,x) < +\infty $, by replacing $Z=Y-\widetilde{Y}$, $a=p-\widetilde{p}$ and $\ell=q-\widetilde{q}$, and fixing $\lambda$ and $s$ sufficiently large, we obtain
$$\|p-\widetilde{p}\|_{L^2(\Omega)}^2 + \|q-\widetilde{q}\|_{L^2(\Gamma)}^2 \leq C \left(\|(Y-\widetilde{Y})(\theta, \cdot)\|_{\mathbb{H}^2}^2 + \|\partial_t (y-\widetilde{y})\|_{L^2\left(\omega_{t_0,t_1}\right)}^2\right).$$
Thus, the proof of Theorem \ref{thm1} is complete.
\end{proof}

As a direct corollary of Theorem \ref{thm1}, we have the following uniqueness result.
\begin{corollary}
Under the same assumptions of Theorem \ref{thm1}, if
\begin{align*}
y(\theta, \cdot)&=\widetilde{y}(\theta, \cdot) \quad\quad\text{ in }\Omega,\\
\partial_t y(t,x) &=\partial_t\widetilde{y}(t,x) \quad\text{ in } \omega_{t_0,t_1},
\end{align*}
then $p=\widetilde{p}$ in $\Omega$ and $q= \widetilde{q}$ in $\Gamma$.
\end{corollary}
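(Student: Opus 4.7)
The plan is to read this off directly from the Lipschitz stability estimate \eqref{eqthm1} by verifying that under the stated hypotheses the right-hand side of \eqref{eqthm1} is zero.

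First I would unpack what the two assumptions give in terms of the norms appearing on the right-hand side of \eqref{eqthm1}. The identity $\partial_t y(t,x) = \partial_t \widetilde y(t,x)$ in $\omega_{t_0,t_1}$ immediately forces $\|\partial_t y - \partial_t \widetilde y\|_{L^2(\omega_{t_0,t_1})} = 0$. For the other term, I would use the regularity provided by Proposition \ref{prop2}, which guarantees that for any $\tau \in (0,T)$ (in particular $\tau < \theta$) the solutions $Y, \widetilde Y$ belong to $\mathbb{E}_2(\tau,T)$, so the point values $Y(\theta,\cdot), \widetilde Y(\theta,\cdot)$ lie in $\mathbb{H}^2$. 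The hypothesis $y(\theta,\cdot) = \widetilde y(\theta,\cdot)$ in $\Omega$ then yields equality in $H^2(\Omega)$, and by the compatibility condition built into the space $\mathbb{H}^2$, namely $y_\Gamma = y_{|\Gamma}$, it follows that $y_\Gamma(\theta,\cdot) = y(\theta,\cdot)_{|\Gamma} = \widetilde y(\theta,\cdot)_{|\Gamma} = \widetilde y_\Gamma(\theta,\cdot)$ in $H^2(\Gamma)$. Hence $\|Y(\theta,\cdot) - \widetilde Y(\theta,\cdot)\|_{\mathbb{H}^2} = 0$.

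Next I would invoke Theorem \ref{thm1}: the initial data lie in $\mathcal{I}$ and the potentials lie in $\mathcal{P}$ by assumption, so \eqref{eqthm1} applies and gives
\begin{equation*}
\|(p-\widetilde{p}, q-\widetilde{q})\|_{\mathbb{L}^{2}} \leq C\bigl(0 + 0\bigr) = 0.
\end{equation*}
Since the $\mathbb{L}^2$ norm is a genuine norm on $L^2(\Omega)\times L^2(\Gamma)$, this forces $p - \widetilde p = 0$ in $L^2(\Omega)$ and $q - \widetilde q = 0$ in $L^2(\Gamma)$, i.e.\ $p = \widetilde p$ almost everywhere in $\Omega$ and $q = \widetilde q$ almost everywhere on $\Gamma$, which is the desired conclusion.

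There is no real obstacle here: once Theorem \ref{thm1} is in hand, the corollary is a one-line consequence, the only subtle point being the verification that $y(\theta,\cdot) = \widetilde y(\theta,\cdot)$ in the bulk $\Omega$ indeed propagates to equality of the boundary traces $y_\Gamma(\theta,\cdot) = \widetilde y_\Gamma(\theta,\cdot)$ on $\Gamma$. This is exactly what the definition of $\mathbb{H}^2$ as the trace-compatible subspace of $H^2(\Omega)\times H^2(\Gamma)$ guarantees, so no further argument is needed.
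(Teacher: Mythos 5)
Your proof is correct and is exactly the argument the paper intends: the corollary is stated there as a direct consequence of Theorem \ref{thm1}, obtained by observing that both terms on the right-hand side of \eqref{eqthm1} vanish under the stated hypotheses. Your extra care in checking that equality of $y(\theta,\cdot)$ in $\Omega$ propagates to the boundary components via the trace-compatibility built into $\mathbb{H}^2$ is a correct and worthwhile clarification of the one point the paper leaves implicit.
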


\subsection{Proof of the stability estimate for initial data}\label{subsec3.3}
Using the logarithmic convexity method (see Appendix \ref{ap1}), we prove Proposition \ref{pthm1} which is based on Theorem \ref{thm1}.
\begin{proof}[Proof of Proposition \ref{pthm1}]
Throughout the proof, $C$ will denote a generic constant which is independent of initial data. Recall that $Z=Y-\widetilde{Y}$, $Z_0=Z(0)$, $a=p-\widetilde{p}$ and $\ell=q-\widetilde{q}$. Then, $V=\partial_t Z$ is the solution of
\begin{empheq}[left = \empheqlbrace]{alignat=2}
\begin{aligned}
&\partial_t v =\dv(A(x) \nabla v) + B(x)\cdot \nabla v +p(x) v+a(x)\partial_t\widetilde{y}, &\text{ in } \Omega_T ,\\
&\partial_t v_{\Gamma}=\dv_{\Gamma} (D(x)\nabla_\Gamma v_{\Gamma}) -\partial_{\nu}^A v + \langle b(x), \nabla_\Gamma v_{\Gamma}\rangle_{\Gamma}+q(x) z_{\Gamma}+\ell(x)\partial_t\widetilde{y}_{\Gamma}, &\text{on } \Gamma_T,\\
& v_{\Gamma}(t,x)= v_{|\Gamma}(t,x), &\text{on } \Gamma_T ,\\
& v(0,x)=\dv(A(x)\nabla z_0)+ B(x)\cdot \nabla z_0+p z_0 +a \widetilde{y}_0, &\text{in } \Omega,\\
& v_{\Gamma}(0,x) =\dv_{\Gamma} (D(x)\nabla_\Gamma z_{0,\Gamma})-\partial_{\nu}^A z_0 + \langle b,\nabla_\Gamma z_{0,\Gamma} \rangle_\Gamma +q z_{0,\Gamma}+\ell \widetilde{y}_{0,\Gamma}, &\text{on } \Gamma.
\label{pe1tope3}
\end{aligned}
\end{empheq}
Consider $W=(w,w_\Gamma)$ the solution of
\begin{empheq}[left = \empheqlbrace]{alignat=2}
\begin{aligned}
&\partial_t w =\dv(A(x) \nabla w) + B(x)\cdot \nabla w +p w+a(x)\partial_t\widetilde{y}, &\text{ in } \Omega_T ,\\
&\partial_t w_{\Gamma}=\dv_{\Gamma} (D(x)\nabla_\Gamma w_{\Gamma}) -\partial_{\nu}^A w + \langle b(x), \nabla_\Gamma w_{\Gamma}\rangle_{\Gamma}+q w_{\Gamma}+\ell(x)\partial_t\widetilde{y}_{\Gamma}, &\text{on } \Gamma_T,\\
&w_{\Gamma}(t,x)= w_{|\Gamma}(t,x), &\text{ on } \Gamma_T ,\\
&W(0)=(0,0) &\Omega\times\Gamma . \label{pe6tope9}
\end{aligned}
\end{empheq}
The function $U=V-W$ satisfies
\begin{empheq}[left = \empheqlbrace]{alignat=2}
\begin{aligned}
&\partial_t u =\dv(A(x) \nabla u) + B(x)\cdot \nabla u +p(x) u, &\hspace{1.5cm} \text{ in } \Omega_T ,\\
&\partial_t u_{\Gamma}=\dv_{\Gamma} (D(x)\nabla_\Gamma u_{\Gamma}) -\partial_{\nu}^A u + \langle b(x), \nabla_\Gamma u_{\Gamma}\rangle_{\Gamma}+q(x) u_{\Gamma}, &\text{on } \Gamma_T,\\
&u_{\Gamma}(t,x)= u_{|\Gamma}(t,x), &\text{ on } \Gamma_T ,\\
&U(0)=V(0), &\Omega\times\Gamma .\label{pe10tope11}
\end{aligned}
\end{empheq}
By Duhamel's formula $\displaystyle W(t)=\int_0^t \mathcal{T}(t-\tau)[a\,\partial_\tau \widetilde{y}, \ell\,\partial_\tau \widetilde{y}_\Gamma] \,\d \tau$, we have
\begin{align*}
\|W(t)\|_{\mathbb{L}^2} &\leq  C \int_0^T \|(a\,\partial_\tau \widetilde{y},\ell\,\partial_\tau \widetilde{y}_\Gamma)\|_{\mathbb{L}^2} \,\d \tau, \quad 0\leq t \leq T.
\end{align*}
On the other hand, using inequality \eqref{bd2}, we derive
$$\|(a\,\partial_\tau \widetilde{y},\ell\,\partial_\tau \widetilde{y}_\Gamma)\|_{\mathbb{L}^2}^2=\int_\Omega a^2(x) (\partial_\tau \widetilde{y})^2 \,\d x + \int_\Gamma \ell^2(x) (\partial_\tau \widetilde{y}_\Gamma)^2 \,\d S \leq C \|(a,\ell)\|_{\mathbb{L}^2}^2.$$
Therefore,
\begin{equation}
\|W(t)\|_{\mathbb{L}^2} \leq C \|(a,\ell)\|_{\mathbb{L}^2} , \quad 0\leq t \leq T. \label{inq1}
\end{equation}
Using $Y_0, \widetilde{Y}_0 \in \mathcal{I}$, we obtain
\begin{align*}
\|U(0)\|_{\mathbb{L}^\infty} =\|V(0)\|_{\mathbb{L}^\infty}&=\|Z_t(0)\|_{\mathbb{L}^\infty} \leq M \qquad \text{and} \qquad \|U(0)\|_{\mathbb{L}^2} \le M_1
\end{align*}
for some positive constant $M_1$. By Proposition \ref{prop21}, the operator $\mathcal{A}$ given by \eqref{op1} generates an analytic semigroup of angle $\dfrac{\pi}{2}$ on $\mathbb{L}^2$. Hence, applying Lemma \ref{lemA4} to \eqref{pe10tope11}, we obtain
\begin{equation}
\|U(t)\|_{\mathbb{L}^2} \leq K M_1^{1-\frac{t}{\theta}} \|U(\theta)\|_{\mathbb{L}^2}^{\frac{t}{\theta}} , \quad 0\leq t\leq \theta. \label{logeq1}
\end{equation}
Since $Z_t=U+W$, by \eqref{inq1} and \eqref{logeq1}, we have
\begin{align}
\|Z_t(t)\|_{\mathbb{L}^2} & \leq \|U(t)\|_{\mathbb{L}^2} + \|W(t)\|_{\mathbb{L}^2} \nonumber\\
& \leq C \|U(\theta)\|_{\mathbb{L}^2}^{\frac{t}{\theta}} + C \|(a,\ell)\|_{\mathbb{L}^2} \nonumber\\
& \leq C (\|Z_t(\theta)\|_{\mathbb{L}^2}+ \|(a,\ell)\|_{\mathbb{L}^2})^{\frac{t}{\theta}} + C \|(a,\ell)\|_{\mathbb{L}^2} \label{inq2}
\end{align}  
for $0 \leq t \leq \theta$. Using \eqref{bd2} with trace theorem and
\begin{align*}
\partial_t z(\theta,x)&=\dv(A(x) \nabla z(\theta,x))+ B(x)\cdot \nabla z(\theta,x)+p(x)z(\theta,x)+a(x)\widetilde{y}(\theta,x),\\
\partial_t z_\Gamma(\theta,x)&=\dv_{\Gamma} (D(x)\nabla_\Gamma z_{\Gamma}(\theta,x))-\partial_{\nu}^A z(\theta,x)+ \langle b(x), \nabla_\Gamma z_{\Gamma}(\theta,x)\rangle_{\Gamma} +q(x)z_\Gamma(\theta,x) +\ell(x)\widetilde{y}_\Gamma(\theta,x),
\end{align*}
we infer that
\begin{align*}
\|\partial_t z(\theta,\cdot)\|_{L^2(\Omega)}^2 &\leq C \|z(\theta,\cdot)\|_{H^2(\Omega)}^2 + C\|a\|_{L^2(\Omega)}^2,\\
\|\partial_t z_\Gamma(\theta,\cdot)\|_{L^2(\Gamma)}^2 &\leq C \|z_\Gamma(\theta,\cdot)\|_{H^2(\Gamma)}^2 + C \|z(\theta,\cdot)\|_{H^2(\Omega)}^2 + C\|\ell\|_{L^2(\Gamma)}^2.
\end{align*}
Hence
$$\|Z_t(\theta)\|_{\mathbb{L}^2} \leq C \|Z(\theta)\|_{\mathbb{H}^2}+ C \|(a,\ell)\|_{\mathbb{L}^2}.$$
Then, the inequality \eqref{inq2} yields
$$\|Z_t(t)\|_{\mathbb{L}^2} \leq C (\|Z(\theta)\|_{\mathbb{H}^2}+ \|(a,\ell)\|_{\mathbb{L}^2})^{\frac{t}{\theta}} + C \|(a,\ell)\|_{\mathbb{L}^2}, \qquad 0 \leq t \leq \theta.$$
Consequently, we obtain
\begin{align*}
\|Y_0 -\widetilde{Y}_0\|_{\mathbb{L}^2} &= \|Z(0)\|_{\mathbb{L}^2} =\left\| -\int_0^{\theta} Z_\tau(\tau) \,\d \tau + Z(\theta)\right\|_{\mathbb{L}^2}\\
&\leq C \int_0^{\theta} (\|Z(\theta)\|_{\mathbb{H}^2}+ \|(a,\ell)\|_{\mathbb{L}^2})^{\frac{\tau}{\theta}} \,\d \tau + C\theta \|(a,\ell)\|_{\mathbb{L}^2} + \|Z(\theta)\|_{\mathbb{L}^2}\\
&= C\theta \frac{(\|Z(\theta)\|_{\mathbb{H}^2}+ \|(a,\ell)\|_{\mathbb{L}^2})-1}{\log(\|Z(\theta)\|_{\mathbb{H}^2}+ \|(a,\ell)\|_{\mathbb{L}^2})} + C\theta \|(a,\ell)\|_{\mathbb{L}^2} + \|Z(\theta)\|_{\mathbb{L}^2}\\
& \le C \left(\frac{E-1}{\log E} + E \right),
\end{align*}
where we denoted $E:=\|Z(\theta)\|_{\mathbb{H}^2}+ \|(a,\ell)\|_{\mathbb{L}^2}$. By Theorem \ref{thm1}, we have
$$\|(a,\ell)\|_{\mathbb{L}^2} \le C\left(\|Z(\theta)\|_{\mathbb{H}^2} + \|\partial_t z \|_{L^2\left(\omega_{t_0, t_1}\right)}\right).$$
Therefore, when $\|Z(\theta)\|_{\mathbb{H}^2} + \|\partial_t z \|_{L^2\left(\omega_{t_0, t_1}\right)}$ is sufficiently small, we obtain
\begin{equation}
0 < E \le C_1\left(\|Z(\theta)\|_{\mathbb{H}^2} + \|\partial_t z \|_{L^2\left(\omega_{t_0, t_1}\right)}\right) <1 \label{eqt1}
\end{equation}
for some constant $C_1>0$. Using the inequality $\dfrac{\tau -1}{\log \tau} +\tau \leq -\dfrac{1+ \mathrm{e}^{-2}}{\log \tau}$ for $0<\tau <1$ and \eqref{eqt1}, we deduce that
$$\|Y_0 -\widetilde{Y}_0\|_{\mathbb{L}^2} \le \frac{-C}{\log \left(C_1\left(\|Z(\theta)\|_{\mathbb{H}^2} + \|\partial_t z \|_{L^2\left(\omega_{t_0, t_1}\right)}\right)\right)}.$$
Finally, the result follows by writing $Z=Y-\widetilde{Y}$.
\end{proof}

\begin{remark}
Our method differs from the one in \cite{CGR'06}, which uses Theorem \ref{thmc2}. The latter requires more regularity on coefficients $B$ and $b$ to write $\mathcal{A}=\mathcal{A}_+ +\mathcal{A}_-$, where
\begin{align*}
\begin{aligned}
&\mathcal{A}_+=\begin{pmatrix} \dv(A\nabla\cdot)-\dfrac{1}{2} \dv B + p & 0\\ -\partial_\nu^A & \dv_\Gamma(D\nabla_\Gamma\cdot) +\dfrac{1}{2}(B\cdot \nu -\dv_\Gamma b) + q \end{pmatrix},\\
&\mathcal{A}_-=\begin{pmatrix} \dv(\cdot B)-\dfrac{1}{2} \dv B & 0\\ 0 & \dv_\Gamma(\cdot b) -\dfrac{1}{2}(\dv_\Gamma b+ B\cdot \nu)\end{pmatrix}.
\end{aligned}
\end{align*}
It also requires more regular initial states. See \cite[Theorem 4.3]{CGR'06}, where the authors dealt with initial values in $H^4(\Omega)$.
\end{remark}

\appendix
\section{Logarithmic convexity}\label{ap1}
For completeness, we summarize some important results on the logarithmic convexity method used in the proof of Proposition \ref{pthm1}.

Let $(H, \langle \cdot, \cdot \rangle)$ be a real Hilbert space and $A \colon D(A)\subset H \longrightarrow H$ a linear operator. Consider the following abstract Cauchy problem
\begin{empheq}[left = \empheqlbrace]{alignat=2}
\begin{aligned}
& u'(t)=A u(t), \quad t \geq 0,\\
& u(0)=u_0 \in H. \label{A1}
\end{aligned}
\end{empheq}
The basic idea of logarithmic convexity for \eqref{A1} is that the solution is small at any intermediate time, provided that it is small at an arbitrary fixed time, given a bounded initial data.

\noindent\textbf{Case 1: $A$ is self-adjoint and generates a $C_0$-semigroup.}\\
The logarithmic convexity method in this case is due to Agmon and Nirenberg \cite{AN'63}. The following result can be found in \cite{GaT'11}.
\begin{lemma}
Consider $A$ a self-adjoint operator generator of a $C_0$-semigroup on $H$. Let $\theta >0$ and $M>0$ be fixed. For all $u_0\in H$ such that $\|u_0\|\leq M$, the solution of \eqref{A1} satisfies
\begin{equation}
\|u(t)\| \leq M^{1- \frac{t}{\theta}} \|u(\theta)\|^{\frac{t}{\theta}} \label{eqlc1}
\end{equation}
for all $t\in [0,\theta]$.
\end{lemma}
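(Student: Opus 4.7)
The plan is to reduce the claim to convexity of the auxiliary function $\phi(t) := \log \|u(t)\|^2$ on $[0,\theta]$. Once convexity is established, writing $t = (1-t/\theta)\cdot 0 + (t/\theta)\cdot \theta$ and exponentiating yields $\|u(t)\|^2 \leq \|u(0)\|^{2(1-t/\theta)}\|u(\theta)\|^{2t/\theta}$, and then $\|u_0\| \leq M$ gives \eqref{eqlc1}.

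To prove convexity, I first assume $u(t) \neq 0$ on $[0,\theta]$, so that $\phi$ is well-defined. Set $N(t) := \|u(t)\|^2$ and $f(t) := \langle Au(t), u(t)\rangle$. Differentiating and using $u' = Au$, one finds
\begin{equation*}
N'(t) = 2\langle Au(t), u(t)\rangle = 2f(t), \qquad f'(t) = 2\langle Au(t), Au(t)\rangle = 2\|Au(t)\|^2,
\end{equation*}
where self-adjointness of $A$ is crucial in the second identity (it allows moving one copy of $A$ across the inner product). Consequently
\begin{equation*}
\phi''(t) = \frac{N''(t)N(t) - N'(t)^2}{N(t)^2} = \frac{4\bigl(\|Au(t)\|^2\|u(t)\|^2 - \langle Au(t), u(t)\rangle^2\bigr)}{\|u(t)\|^4} \geq 0
\end{equation*}
by Cauchy-Schwarz, so $\phi$ is convex. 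This is the heart of the argument.

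Two technical points remain. First, the formal differentiations above need to be justified when $u_0$ is merely in $H$: a self-adjoint generator of a $C_0$-semigroup is necessarily bounded above, so $e^{tA}$ is analytic for $t > 0$ and $u(t) \in D(A^k)$ for every $k$; strong continuity of the semigroup then carries the convexity inequality up to $t = 0$. Second, if $u(t_*) = 0$ for some $t_* \in [0,\theta]$, the spectral representation $e^{tA} = \int e^{t\lambda}\,dE_\lambda$ (with spectrum in a left half-line) shows that $e^{tA}$ is injective, forcing $u_0 = 0$; the inequality is then trivial. The main (and essentially only) obstacle is this careful handling of regularity and the possible vanishing of $u$; modulo these points, the convexity computation is a one-line application of Cauchy-Schwarz.
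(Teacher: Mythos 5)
Your proof is correct. Note first that the paper itself offers no proof of this lemma: it is stated in Appendix A as a known result and attributed to \cite{GaT'11}, so there is no in-paper argument to compare against. What you give is the classical Agmon--Nirenberg differential-inequality version of logarithmic convexity: with $N(t)=\|u(t)\|^2$ and $f(t)=\langle Au,u\rangle$ one gets $N'=2f$, $f'=2\|Au\|^2$ (self-adjointness being used exactly where you say), and $(\log N)''\geq 0$ by Cauchy--Schwarz; convexity then interpolates between $t=0$ and $t=\theta$. Your two technical remarks are exactly the right ones and are handled correctly: boundedness above of a self-adjoint generator (which the paper itself records in the remark following the lemma, citing \cite{EN'99}) gives analyticity of the semigroup, hence $u(t)\in D(A^k)$ for $t>0$ and justifies the differentiations on $(0,\theta)$, with continuity of $\|u(\cdot)\|$ extending the convexity inequality to the closed interval; and injectivity of $\mathrm{e}^{tA}$ via the spectral theorem disposes of the degenerate case $u(t_*)=0$. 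A mildly slicker alternative, which some references use, is to bypass the differential inequality entirely: writing $\|u(t)\|^2=\int \mathrm{e}^{2t\lambda}\,\d\langle E_\lambda u_0,u_0\rangle$ via the spectral measure, log-convexity in $t$ is immediate from H\"older's inequality, and the vanishing/regularity issues never arise. Either route is complete; yours has the advantage of being the template that generalizes to the non-self-adjoint settings treated in Cases 2 and 3 of the appendix.
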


\begin{remark}
A self-adjoint operator $A$ on $H$ generates a $C_0$-semigroup if and only if it is bounded above, i.e., there exists $\kappa \in \mathbb{R}$ such that
$$\langle Ax,x\rangle \leq \kappa \|x\|^2 \qquad \text{ for all } x\in D(A),$$
see, e.g., \cite[Proposition 3.28, pp. 91]{EN'99}. In this case, the semigroup $\left(\mathrm{e}^{tA}\right)_{t\ge 0}$ generated by $A$ is analytic of angle $\dfrac{\pi}{2}$, see, for instance, \cite[Corollary 4.7]{EN'99}.
\end{remark}

\noindent\textbf{Case 2: $A$ is subordinated to its symmetric part.}\\
Next, we state an extension of the previous logarithmic convexity that can be found in \cite[Theorem 3.1.3]{Is'17}. Consider the following inequality
\begin{equation}
\|\partial_t u + Au\| \leq \gamma\|u\|, \qquad \text{ on } (0,\theta) \label{c2eq1}
\end{equation}
for some positive constant $\gamma$. We assume that $A$ takes the form $A=A_+ + A_-$, where $A_+$ is a symmetric operator with domain $D(A)$ and $A_-$ is skew-symmetric satisfying the following conditions
\begin{align}
\|A_- u\|^2 &\leq \gamma\left(\|A_+ u\| \|u\|+ \|u\|^2\right), \label{c2eq2}\\
\partial_t \langle A_+ u, u\rangle &\leq 2\langle A_+ u, \partial_t u\rangle + \gamma\left(\|A_+ u\|\|u\|+\|u\|^2\right). \label{c2eq3}
\end{align}

\begin{theorem}\label{thmc2}
Let $M>0$ and $u\in C^1([0,T];H)$ be a solution of inequality \eqref{c2eq1} such that $u(t)\in D(A)$ for all $t\in [0,T]$, and $\|u(0)\|\leq M$. Then,
\begin{equation}
\|u(t)\| \leq C_1 M^{1- \mu(t)} \|u(\theta)\|^{\mu(t)}, \qquad 0\leq t\leq \theta \label{c2eq4}
\end{equation}
for some constant $C_1=C\left(\gamma,\theta\right)$ and $\mu(t)=\dfrac{1-\mathrm{e}^{-Ct}}{1-\mathrm{e}^{-C\theta}}$, where $C$ is a constant depending on $\gamma$. Moreover, if $\gamma=0$ we can choose $C_1 =1$ and $\mu(t)=\dfrac{t}{\theta}$ for all $t\in [0,\theta]$.
\end{theorem}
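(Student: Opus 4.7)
The plan is to adapt the classical Agmon--Nirenberg logarithmic convexity argument to this perturbed, non-self-adjoint setting: derive a second-order differential inequality for $\Phi(t) := \log\|u(t)\|^2$ and integrate it by an ODE comparison. Set $N(t) := \|u(t)\|^2$ and $f := \partial_t u + Au$, so that $\|f\| \leq \gamma\|u\|$ by \eqref{c2eq1}.

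First, using $\partial_t u = -A_+u - A_-u + f$ together with the skew-symmetry $\langle A_-u, u\rangle = 0$, I obtain
\begin{equation*}
N'(t) = -2\langle A_+u, u\rangle + 2\langle f, u\rangle.
\end{equation*}
Differentiating once more and invoking the structural hypothesis \eqref{c2eq3}, the leading positive contribution to $N''(t)$ is $4\|A_+u\|^2$, accompanied by the indefinite cross term $4\langle A_+u, A_-u\rangle$ plus terms of the form $\langle A_+u, f\rangle$, $\langle f, \partial_t u\rangle$, $\|A_+u\|\|u\|$, and $\|u\|^2$.

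The heart of the argument is then to establish an Agmon--Nirenberg-type inequality
\begin{equation*}
N(t)N''(t) - (N'(t))^2 \geq -C_1\, N(t)\,|N'(t)| - C_2\, N(t)^2,
\end{equation*}
equivalently $\Phi''(t) + C_1|\Phi'(t)| + C_2 \geq 0$. This is precisely where the subordination condition \eqref{c2eq2} is used: the indefinite cross term is controlled by Cauchy--Schwarz combined with \eqref{c2eq2} and a small-parameter Young inequality, $|\langle A_+u, A_-u\rangle| \leq \varepsilon\|A_+u\|^2 + C_\varepsilon\gamma(\|A_+u\|\|u\| + \|u\|^2)$, so that the $\varepsilon\|A_+u\|^2$ piece is absorbed into the positive $4\|A_+u\|^2$ term. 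The remaining products are matched against $(N')^2$ via the Cauchy--Schwarz bound $\langle A_+u, u\rangle^2 \leq \|A_+u\|^2\|u\|^2$, with $f$-contributions estimated by $\|f\| \leq \gamma\|u\|$. I expect this bookkeeping, and specifically preserving enough of $\|A_+u\|^2$ after removing the cross term, to be the main technical obstacle.

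Finally, the ODE inequality $\Phi'' + C_1|\Phi'| + C_2 \geq 0$ is integrated against the companion equation $\phi'' + C\phi' = 0$, whose two-dimensional solution space is spanned by $1$ and $\mathrm{e}^{-Ct}$. Writing a general element as $\phi(t) = (1-\mu(t))\phi(0) + \mu(t)\phi(\theta)$ forces
\begin{equation*}
\mu(t) = \frac{1 - \mathrm{e}^{-Ct}}{1 - \mathrm{e}^{-C\theta}},
\end{equation*}
and a maximum-principle argument for the differential inequality yields $\Phi(t) \leq (1-\mu(t))\Phi(0) + \mu(t)\Phi(\theta) + K(\gamma, \theta)$. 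Exponentiating this and using $\|u(0)\| \leq M$ produces \eqref{c2eq4} with $C_1 = \mathrm{e}^{K}$. When $\gamma = 0$, the inequality collapses to pure log-convexity $\Phi'' \geq 0$, so $C = 0$, $K = 0$, and $\mu(t) = t/\theta$ by continuity, recovering the classical three-lines bound with constant $C_1 = 1$.
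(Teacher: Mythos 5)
The paper does not prove this statement at all: Theorem \ref{thmc2} is quoted verbatim from Isakov \cite{Is'17} (Theorem 3.1.3) purely for context, and the remark following the proof of Proposition \ref{pthm1} explains that the authors deliberately avoid it in favour of Miller's result (Lemma \ref{lemA4}). So there is no in-paper argument to compare against; your proposal has to be judged as a reconstruction of Isakov's proof, and in outline it follows exactly the standard Agmon--Nirenberg route: compute $N'$ using skew-symmetry of $A_-$, compute $N''$ using \eqref{c2eq3}, isolate $4\|A_+u\|^2$, absorb the cross term via \eqref{c2eq2}, derive $\Phi''+C_1|\Phi'|+C_2\ge 0$ for $\Phi=\log\|u\|^2$, and compare with $\phi''+C\phi'=0$ to get the weight $\mu(t)=(1-\mathrm{e}^{-Ct})/(1-\mathrm{e}^{-C\theta})$. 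That skeleton is correct, as is the $\gamma=0$ degeneration.

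There is, however, one concrete gap at precisely the step you flag as the ``main technical obstacle,'' and the strategy you describe for it does not close as stated. After absorbing the cross term, the error terms in $NN''-(N')^2$ include contributions of the form $\gamma\|A_+u\|\,\|u\|\cdot N$, and $\|A_+u\|\,\|u\|$ is \emph{not} controlled by $|N'|$ or $N$: the quantity $N'=-2\langle A_+u,u\rangle+2\langle f,u\rangle$ only sees the Rayleigh-quotient component of $A_+u$, and $\|A_+u\|$ can be arbitrarily large compared with $|\langle A_+u,u\rangle|/\|u\|$. Simply ``matching the remaining products against $(N')^2$ via $\langle A_+u,u\rangle^2\le\|A_+u\|^2\|u\|^2$'' therefore leaves an unbounded term. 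The missing idea is the recentering $\widetilde{A}u:=A_+u-wu$ with $w:=\langle A_+u,u\rangle/\|u\|^2$: then $\|A_+u\|^2\|u\|^2-\langle A_+u,u\rangle^2=\|\widetilde{A}u\|^2\|u\|^2$ is exactly the Cauchy--Schwarz defect available on the left, one writes $\|A_+u\|\le\|\widetilde{A}u\|+|w|\,\|u\|$, absorbs $\varepsilon\|\widetilde{A}u\|^2\|u\|^2$ into that defect, and controls $|w|\,\|u\|^2\le\tfrac12|N'|+\gamma N$. With this insertion your argument goes through and yields the differential inequality you want; without it the absorption fails. (A minor point: exponentiating $\Phi=\log\|u\|^2$ gives $C_1=\mathrm{e}^{K/2}$ rather than $\mathrm{e}^{K}$, which is immaterial.)
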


\noindent\textbf{Case 3: $A$ generates an analytic $C_0$-semigroup of angle $\dfrac{\pi}{2}$.}\\
A generalization of logarithmic convexity inequality \eqref{eqlc1} to the class of operators generating analytic semigroups of angle $\psi \in \left(0,\dfrac{\pi}{2}\right]$ (even in Banach spaces) was established in \cite{Mi'75}.

Here we state the result in a restricted form which is the best suited
for our case.
\begin{lemma}\label{lemA4}
Let $\theta >0$, $M>0$ be fixed and $u_0\in H$ such that $\|u_0\|\leq M$. If $A$ generates an analytic $C_0$-semigroup of angle $\dfrac{\pi}{2}$, then the solution of \eqref{A1} satisfies
\begin{equation}
\|u(t)\| \leq K M^{1- \frac{t}{\theta}} \|u(\theta)\|^{\frac{t}{\theta}}
\end{equation}
for all $t\in [0,\theta]$, where $K\ge 1$ is a constant depending on the semigroup.
\end{lemma}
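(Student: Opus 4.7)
The strategy is to combine the holomorphic extension of the orbit $t\mapsto T(t)u_0$ to the right half-plane with a Phragm\'en--Lindel\"of / three-lines argument. Since $(T(t))_{t\ge 0}$ is analytic of angle $\pi/2$, the $H$-valued map $u(z):=T(z)u_0$ is holomorphic on $\Pi^+:=\{z\in\mathbb{C}:\Re z>0\}$ and strongly continuous on $\overline{\Pi^+}$, and on each proper subsector $\{|\arg z|\le\pi/2-\epsilon\}$ it satisfies a growth bound $\|T(z)\|\le K_0 e^{\omega|z|}$ for constants $K_0\ge 1$ and $\omega\in\mathbb{R}$ depending only on the semigroup. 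By the Hahn--Banach theorem, $\|u(t)\|=\sup_{\|v\|\le 1}|\langle u(t),v\rangle|$, so the task reduces to bounding, uniformly in $v\in H$ with $\|v\|\le 1$, the scalar holomorphic function $\varphi_v(z):=\langle u(z),v\rangle$ on the closed strip $\overline{S}$, where $S:=\{z:0<\Re z<\theta\}$.

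The plan is to apply the three-lines theorem to the auxiliary function
$$F(z):=e^{\varepsilon z(z-\theta)}\, M^{-(1-z/\theta)}\,\|u(\theta)\|^{-z/\theta}\,\varphi_v(z),\qquad z\in \overline{S},$$
for a small parameter $\varepsilon>0$. Because $\Re\bigl[z(z-\theta)\bigr]=\Re z\,(\Re z-\theta)-(\Im z)^2\le -(\Im z)^2$ on $\overline{S}$, the exponential factor provides Gaussian decay as $|\Im z|\to\infty$. On the boundary $\Re z=0$ one uses $|\varphi_v(iy)|\le \|u(iy)\|\le C e^{\omega|y|}M$, and on $\Re z=\theta$ one exploits the identity $u(\theta+iy)=T(iy)u(\theta)$, obtained by analytic continuation of the $H$-valued orbit, to write $|\varphi_v(\theta+iy)|\le C e^{\omega|y|}\|u(\theta)\|$. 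The Gaussian damping absorbs these exponentials, yielding a uniform bound $|F(z)|\le K$ on $\partial S$ for some constant $K\ge 1$ depending only on $K_0,\omega,\theta,\varepsilon$. The Phragm\'en--Lindel\"of principle on a strip (applicable thanks to the Gaussian decay) extends this bound to all of $\overline{S}$, so specializing at $z=t\in[0,\theta]$ and using $e^{\varepsilon t(\theta-t)}\le e^{\varepsilon\theta^2/4}$ gives
$$|\varphi_v(t)|\le K\, M^{1-t/\theta}\,\|u(\theta)\|^{t/\theta}.$$
Taking the supremum over $\|v\|\le 1$ and absorbing all $\varepsilon$-dependent factors (with a fixed, convenient choice of $\varepsilon$) into a final constant $K\ge 1$ yields the claimed inequality.

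The main technical obstacle is the control of the orbit on the imaginary axis: since $iA$ is generally not the generator of a $C_0$-group, the operators $T(iy)$ need not be uniformly bounded, and the identification $u(\theta+iy)=T(iy)u(\theta)$ must be interpreted through analytic continuation of the orbit rather than as a bounded operator action. The at-most-exponential growth in $|\Im z|$ afforded by analyticity of angle $\pi/2$ is precisely what makes the Gaussian damping $e^{\varepsilon z(z-\theta)}$ effective, and this is also the source of the multiplicative constant $K\ge 1$ in the final estimate (in contrast with the self-adjoint case, where $K=1$). Once the boundary bounds and the Gaussian damping are in place, the three-lines theorem delivers the sharp exponents $1-t/\theta$ and $t/\theta$, as in Miller's original argument \cite{Mi'75}.
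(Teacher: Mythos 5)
The paper offers no proof of Lemma \ref{lemA4}: it is quoted, in a restricted form, directly from Miller \cite{Mi'75}, so there is no internal argument to compare yours against. That said, your architecture --- reduce to scalar holomorphic functions via Hahn--Banach and run a three-lines/Phragm\'en--Lindel\"of argument on the strip $\{0<\Re z<\theta\}$ --- is the classical route and is essentially Miller's, so the plan is the right one in spirit.

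There is, however, a genuine gap in the boundary estimates, which you flag as ``the main technical obstacle'' but then do not actually overcome. Analyticity of angle $\pi/2$ yields bounds $\|T(z)\|\le K_\epsilon e^{\omega_\epsilon |z|}$ only on \emph{proper} closed subsectors $|\arg z|\le \pi/2-\epsilon$, and the vertical lines $\Re z=0$ and $\Re z=\theta$ leave every such subsector as $|\Im z|\to\infty$. Consequently the orbit need not extend continuously to the closed half-plane (the operators $T(iy)$ need not exist), and on a line $\Re z=\delta>0$ the norm can grow far faster than exponentially in $|\Im z|$: for the diagonal operator $A=\mathrm{diag}\bigl(-n+i(-1)^n n/\log n\bigr)$ on $\ell^2$, which does generate an analytic $C_0$-semigroup of angle $\pi/2$, one has $\|T(\delta+iy)\|\ge \exp\bigl(\delta\, e^{|y|/(2\delta)}\bigr)$. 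So both the asserted strong continuity on the closed half-plane and the bounds $|\varphi_v(iy)|\le Ce^{\omega|y|}M$ and $|\varphi_v(\theta+iy)|\le Ce^{\omega|y|}\|u(\theta)\|$ are unjustified, and the Gaussian damping $e^{\varepsilon z(z-\theta)}$, which only absorbs growth of order $e^{c(\Im z)^2}$, is not known to suffice. To close the argument you must either import the hypothesis under which Miller actually works --- that, after rescaling, the semigroup is uniformly bounded on the open right half-plane, in which case $\log\|u(z)\|$ is bounded on the strip, the three-lines theorem applies directly, and $K$ is determined by that bound --- or retreat to a proper subsector and use the two-constants theorem with the harmonic measure of that sector, recovering the exponent $t/\theta$ only through an additional limiting argument. (Note that in the diagonal example above the \emph{conclusion} of the lemma still holds, by log-convexity of $\sum_n |c_n|^2 e^{-2nt}$; it is your method, not the statement, that breaks.)
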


\end{document}